  \newfont\fiverm{cmr5} 
  \let\fiverm\fivrm
\def\@picture(#1,#2)(#3,#4){%
  \@picht #2\unitlength
  \setbox\@picbox\hbox to #1\unitlength\bgroup 
  \let\endpicture=\!latexendpicture
  \let\frame=\!latexframe
  \let\linethickness=\!latexlinethickness
  \let\multiput=\!latexmultiput
  \let\put=\!latexput
  \hskip -#3\unitlength \lower #4\unitlength \hbox\bgroup}
\font\fiverm=cmr5
\def\PiC{P\kern-.12em\lower.5ex\hbox{I}\kern-.075emC}
\def\PiCTeX{\PiC\kern-.11em\TeX}
\def\!ifnextchar#1#2#3{%
  \let\!testchar=#1%
  \def\!first{#2}%
  \def\!second{#3}%
  \futurelet\!nextchar\!testnext}
\def\!testnext{%
  \ifx \!nextchar \!spacetoken 
    \let\!next=\!skipspacetestagain
  \else
    \ifx \!nextchar \!testchar
      \let\!next=\!first
    \else 
      \let\!next=\!second 
    \fi 
  \fi
  \!next}
\def\\{\!skipspacetestagain} 
\def\\ {\futurelet\!nextchar\!testnext} 
\def\\{\let\!spacetoken= } \\  
\def\!tfor#1:=#2\do#3{%
  \edef\!fortemp{#2}%
  \ifx\!fortemp\!empty 
    \else
    \!tforloop#2\!nil\!nil\!!#1{#3}%
  \fi}
\def\!tforloop#1#2\!!#3#4{%
  \def#3{#1}%
  \ifx #3\!nnil
    \let\!nextwhile=\!fornoop
  \else
    #4\relax
    \let\!nextwhile=\!tforloop
  \fi 
  \!nextwhile#2\!!#3{#4}}
\def\!etfor#1:=#2\do#3{%
  \def\!!tfor{\!tfor#1:=}%
  \edef\!!!tfor{#2}%
  \expandafter\!!tfor\!!!tfor\do{#3}}
\def\!cfor#1:=#2\do#3{%
  \edef\!fortemp{#2}%
  \ifx\!fortemp\!empty 
  \else
    \!cforloop#2,\!nil,\!nil\!!#1{#3}%
  \fi}
\def\!cforloop#1,#2\!!#3#4{%
  \def#3{#1}%
  \ifx #3\!nnil
    \let\!nextwhile=\!fornoop 
  \else
    #4\relax
    \let\!nextwhile=\!cforloop
  \fi
  \!nextwhile#2\!!#3{#4}}
\def\!ecfor#1:=#2\do#3{%
  \def\!!cfor{\!cfor#1:=}%
  \edef\!!!cfor{#2}%
  \expandafter\!!cfor\!!!cfor\do{#3}}
\def\!empty{}
\def\!nnil{\!nil}
\def\!fornoop#1\!!#2#3{}
\def\!ifempty#1#2#3{%
  \edef\!emptyarg{#1}%
  \ifx\!emptyarg\!empty
    #2%
  \else
    #3%
  \fi}
\def\!getnext#1\from#2{%
  \expandafter\!gnext#2\!#1#2}%
\def\!gnext\\#1#2\!#3#4{%
  \def#3{#1}%
  \def#4{#2\\{#1}}%
  \ignorespaces}
\def\!getnextvalueof#1\from#2{%
  \expandafter\!gnextv#2\!#1#2}%
\def\!gnextv\\#1#2\!#3#4{%
  #3=#1%
  \def#4{#2\\{#1}}%
  \ignorespaces}
\def\!copylist#1\to#2{%
  \expandafter\!!copylist#1\!#2}
\def\!!copylist#1\!#2{%
  \def#2{#1}\ignorespaces}
\def\!wlet#1=#2{%
  \let#1=#2 
  \wlog{\string#1=\string#2}}
\def\!listaddon#1#2{%
  \expandafter\!!listaddon#2\!{#1}#2}
\def\!!listaddon#1\!#2#3{%
  \def#3{#1\\#2}}
\def\!rightappend#1\withCS#2\to#3{\expandafter\!!rightappend#3\!#2{#1}#3}
\def\!!rightappend#1\!#2#3#4{\def#4{#1#2{#3}}}
\def\!leftappend#1\withCS#2\to#3{\expandafter\!!leftappend#3\!#2{#1}#3}
\def\!!leftappend#1\!#2#3#4{\def#4{#2{#3}#1}}
\def\!lop#1\to#2{\expandafter\!!lop#1\!#1#2}
\def\!!lop\\#1#2\!#3#4{\def#4{#1}\def#3{#2}}
\def\!loop#1\repeat{\def\!body{#1}\!iterate}
\def\!iterate{\!body\let\!next=\!iterate\else\let\!next=\relax\fi\!next}
\def\!!loop#1\repeat{\def\!!body{#1}\!!iterate}
\def\!!iterate{\!!body\let\!!next=\!!iterate\else\let\!!next=\relax\fi\!!next}
\def\!removept#1#2{\edef#2{\expandafter\!!removePT\the#1}}
{\catcode`p=12 \catcode`t=12 \gdef\!!removePT#1pt{#1}}
\def\placevalueinpts of <#1> in #2 {%
  \!removept{#1}{#2}}
\def\!mlap#1{\hbox to 0pt{\hss#1\hss}}
\def\!vmlap#1{\vbox to 0pt{\vss#1\vss}}
\def\!not#1{%
  #1\relax
    \!switchfalse
  \else
    \!switchtrue
  \fi
  \if!switch
  \ignorespaces}
\def\wlog#1{}    
\newdimen\headingtoplotskip     
\newdimen\linethickness         
\newdimen\longticklength        
\newdimen\plotsymbolspacing     
\newdimen\shortticklength       
\newdimen\stackleading          
\newdimen\tickstovaluesleading  
\newdimen\totalarclength        
\newdimen\valuestolabelleading  
\newbox\!boxA                   
\newbox\!boxB                   
\newbox\!picbox                 
\newbox\!plotsymbol             
\newbox\!putobject              
\newbox\!shadesymbol            
\newdimen\!Xleft                
\newdimen\!Xright               
\newdimen\!Xsave                
\newdimen\!Ybot                 
\newdimen\!Ysave                
\newdimen\!Ytop                 
\newdimen\!angle                
\newdimen\!arclength            
\newdimen\!areabloc             
\newdimen\!arealloc             
\newdimen\!arearloc             
\newdimen\!areatloc             
\newdimen\!bshrinkage           
\newdimen\!checkbot             
\newdimen\!checkleft            
\newdimen\!checkright           
\newdimen\!checktop             
\newdimen\!dimenA               
\newdimen\!dimenB               
\newdimen\!dimenC               
\newdimen\!dimenD               
\newdimen\!dimenE               
\newdimen\!dimenF               
\newdimen\!dimenG               
\newdimen\!dimenH               
\newdimen\!dimenI               
\newdimen\!distacross           
\newdimen\!downlength           
\newdimen\!dp                   
\newdimen\!dshade               
\newdimen\!dxpos                
\newdimen\!dxprime              
\newdimen\!dypos                
\newdimen\!dyprime              
\newdimen\!ht                   
\newdimen\!leaderlength         
\newdimen\!lshrinkage           
\newdimen\!midarclength         
\newdimen\!offset               
\newdimen\!plotheadingoffset    
\newdimen\!plotsymbolxshift     
\newdimen\!plotsymbolyshift     
\newdimen\!plotxorigin          
\newdimen\!plotyorigin          
\newdimen\!rootten              
\newdimen\!rshrinkage           
\newdimen\!shadesymbolxshift    
\newdimen\!shadesymbolyshift    
\newdimen\!tenAa                
\newdimen\!tenAc                
\newdimen\!tenAe                
\newdimen\!tshrinkage           
\newdimen\!uplength             
\newdimen\!wd                   
\newdimen\!wmax                 
\newdimen\!wmin                 
\newdimen\!xB                   
\newdimen\!xC                   
\newdimen\!xE                   
\newdimen\!xM                   
\newdimen\!xS                   
\newdimen\!xaxislength          
\newdimen\!xdiff                
\newdimen\!xleft                
\newdimen\!xloc                 
\newdimen\!xorigin              
\newdimen\!xpivot               
\newdimen\!xpos                 
\newdimen\!xprime               
\newdimen\!xright               
\newdimen\!xshade               
\newdimen\!xshift               
\newdimen\!xtemp                
\newdimen\!xunit                
\newdimen\!xxE                  
\newdimen\!xxM                  
\newdimen\!xxS                  
\newdimen\!xxloc                
\newdimen\!yB                   
\newdimen\!yC                   
\newdimen\!yE                   
\newdimen\!yM                   
\newdimen\!yS                   
\newdimen\!yaxislength          
\newdimen\!ybot                 
\newdimen\!ydiff                
\newdimen\!yloc                 
\newdimen\!yorigin              
\newdimen\!ypivot               
\newdimen\!ypos                 
\newdimen\!yprime               
\newdimen\!yshade               
\newdimen\!yshift               
\newdimen\!ytemp                
\newdimen\!ytop                 
\newdimen\!yunit                
\newdimen\!yyE                  
\newdimen\!yyM                  
\newdimen\!yyS                  
\newdimen\!yyloc                
\newdimen\!zpt                  
\newif\if!axisvisible           
\newif\if!gridlinestoo          
\newif\if!keepPO                
\newif\if!placeaxislabel        
\newif\if!switch                
\newif\if!xswitch               
\newtoks\!axisLaBeL             
\newtoks\!keywordtoks           
\newwrite\!replotfile           
\def\!cosrotationangle{1}      
\def\!sinrotationangle{0}      
\def\!xpivotcoord{0}           
\def\!xref{0}                  
\def\!xshadesave{0}            
\def\!ypivotcoord{0}           
\def\!yref{0}                  
\def\!yshadesave{0}            
\def\!zero{0}                  
\let\wlog=\!!!wlog
\def\normalgraphs{%
  \longticklength=.4\baselineskip
  \shortticklength=.25\baselineskip
  \tickstovaluesleading=.25\baselineskip
  \valuestolabelleading=.8\baselineskip
  \linethickness=.4pt
  \stackleading=.17\baselineskip
  \headingtoplotskip=1.5\baselineskip
  \visibleaxes
  \ticksout
  \nogridlines
  \unloggedticks}
\def\setplotarea x from #1 to #2, y from #3 to #4 {%
  \!arealloc=\!M{#1}\!xunit \advance \!arealloc -\!xorigin
  \!areabloc=\!M{#3}\!yunit \advance \!areabloc -\!yorigin
  \!arearloc=\!M{#2}\!xunit \advance \!arearloc -\!xorigin
  \!areatloc=\!M{#4}\!yunit \advance \!areatloc -\!yorigin
  \!initinboundscheck
  \!xaxislength=\!arearloc  \advance\!xaxislength -\!arealloc
  \!yaxislength=\!areatloc  \advance\!yaxislength -\!areabloc
  \!plotheadingoffset=\!zpt
  \!dimenput {{\setbox0=\hbox{}\wd0=\!xaxislength\ht0=\!yaxislength\box0}}
     [bl] (\!arealloc,\!areabloc)}
\def\visibleaxes{%
  \def\!axisvisibility{\!axisvisibletrue}}
\def\!fixkeyword#1{%
  \errhelp=\!keywordhelp
  \errmessage{Unrecognized keyword `#1': \the\!keywordtoks{NEW KEYWORD}'}}
\def\fixkeyword#1{%
  \!nextkeyword#1 }
\def\axis {%
  \def\!nextkeyword##1 {%
    \expandafter\ifx\csname !axis##1\endcsname \relax
      \def\!next{\!fixkeyword{##1}}%
    \else
      \def\!next{\csname !axis##1\endcsname}%
    \fi
    \!next}%
  \!offset=\!zpt
  \!axisvisibility
  \!placeaxislabelfalse
  \!nextkeyword}
\def\!axisbottom{%
  \!axisylevel=\!areabloc
  \def\!tickxsign{0}%
  \def\!tickysign{-}%
  \def\!axissetup{\!axisxsetup}%
  \def\!axislabeltbrl{t}%
  \!nextkeyword}
\def\!axistop{%
  \!axisylevel=\!areatloc
  \def\!tickxsign{0}%
  \def\!tickysign{+}%
  \def\!axissetup{\!axisxsetup}%
  \def\!axislabeltbrl{b}%
  \!nextkeyword}
\def\!axisleft{%
  \!axisxlevel=\!arealloc
  \def\!tickxsign{-}%
  \def\!tickysign{0}%
  \def\!axissetup{\!axisysetup}%
  \def\!axislabeltbrl{r}%
  \!nextkeyword}
\def\!axisright{%
  \!axisxlevel=\!arearloc
  \def\!tickxsign{+}%
  \def\!tickysign{0}%
  \def\!axissetup{\!axisysetup}%
  \def\!axislabeltbrl{l}%
  \!nextkeyword}
\def\!axisshiftedto#1=#2 {%
  \if 0\!tickxsign
    \!axisylevel=\!M{#2}\!yunit
    \advance\!axisylevel -\!yorigin
  \else
    \!axisxlevel=\!M{#2}\!xunit
    \advance\!axisxlevel -\!xorigin
  \fi
  \!nextkeyword}
\def\!axisvisible{%
  \!axisvisibletrue  
  \!nextkeyword}
\def\!axisinvisible{%
  \!axisvisiblefalse
  \!nextkeyword}
\def\!axislabel#1 {%
  \!axisLaBeL={#1}%
  \!placeaxislabeltrue
  \!nextkeyword}
\def\csname !axis/\endcsname{%
  \!axissetup 
  \if!placeaxislabel
    \!placeaxislabel
  \fi
  \if +\!tickysign 
    \!dimenA=\!axisylevel
    \advance\!dimenA \!offset 
    \advance\!dimenA -\!areatloc 
    \ifdim \!dimenA>\!plotheadingoffset
      \!plotheadingoffset=\!dimenA 
    \fi
  \fi}
\def\grid #1 #2 {%
  \!countA=#1\advance\!countA 1
  \axis bottom invisible ticks length <\!zpt> andacross quantity {\!countA} /
  \!countA=#2\advance\!countA 1
  \axis left   invisible ticks length <\!zpt> andacross quantity {\!countA} / }
\def\plotheading#1 {%
  \advance\!plotheadingoffset \headingtoplotskip
  \!dimenput {#1} [B] <.5\!xaxislength,\!plotheadingoffset>
    (\!arealloc,\!areatloc)}
\def\!axisxsetup{%
  \!axisxlevel=\!arealloc
  \!axisstart=\!arealloc
  \!axisend=\!arearloc
  \!axisLength=\!xaxislength
  \!!origin=\!xorigin
  \!!unit=\!xunit
  \!xswitchtrue
  \if!axisvisible 
    \!makeaxis
  \fi}
\def\!axisysetup{%
  \!axisylevel=\!areabloc
  \!axisstart=\!areabloc
  \!axisend=\!areatloc
  \!axisLength=\!yaxislength
  \!!origin=\!yorigin
  \!!unit=\!yunit
  \!xswitchfalse
  \if!axisvisible
    \!makeaxis
  \fi}
\def\!makeaxis{%
  \setbox\!boxA=\hbox{
    \beginpicture
      \!setdimenmode
      \setcoordinatesystem point at {\!zpt} {\!zpt}   
      \putrule from {\!zpt} {\!zpt} to
        {\!tickysign\!tickysign\!axisLength} 
        {\!tickxsign\!tickxsign\!axisLength}
    \endpicturesave <\!Xsave,\!Ysave>}%
    \wd\!boxA=\!zpt
    \!placetick\!axisstart}
\def\!placeaxislabel{%
  \advance\!offset \valuestolabelleading
  \if!xswitch
    \!dimenput {\the\!axisLaBeL} [\!axislabeltbrl]
      <.5\!axisLength,\!tickysign\!offset> (\!axisxlevel,\!axisylevel)
    \advance\!offset \!dp  
    \advance\!offset \!ht  
  \else
    \!dimenput {\the\!axisLaBeL} [\!axislabeltbrl]
      <\!tickxsign\!offset,.5\!axisLength> (\!axisxlevel,\!axisylevel)
  \fi
  \!axisLaBeL={}}
\def\arrow <#1> [#2,#3]{%
  \!ifnextchar<{\!arrow{#1}{#2}{#3}}{\!arrow{#1}{#2}{#3}<\!zpt,\!zpt> }}
\def\!arrow#1#2#3<#4,#5> from #6 #7 to #8 #9 {%
%
  \!xloc=\!M{#8}\!xunit   
  \!yloc=\!M{#9}\!yunit
  \!dxpos=\!xloc  \!dimenA=\!M{#6}\!xunit  \advance \!dxpos -\!dimenA
  \!dypos=\!yloc  \!dimenA=\!M{#7}\!yunit  \advance \!dypos -\!dimenA
  \let\!MAH=\!M
  \!setdimenmode
  \!xshift=#4\relax  \!yshift=#5\relax
  \!reverserotateonly\!xshift\!yshift
  \advance\!xshift\!xloc  \advance\!yshift\!yloc
%
  \!xS=-\!dxpos  \advance\!xS\!xshift
  \!yS=-\!dypos  \advance\!yS\!yshift
  \!start (\!xS,\!yS)
  \!ljoin (\!xshift,\!yshift)
%
  \!Pythag\!dxpos\!dypos\!arclength
  \!divide\!dxpos\!arclength\!dxpos  
  \!dxpos=32\!dxpos  \!removept\!dxpos\!!cos
  \!divide\!dypos\!arclength\!dypos  
  \!dypos=32\!dypos  \!removept\!dypos\!!sin
%
  \!halfhead{#1}{#2}{#3}
  \!halfhead{#1}{-#2}{-#3}
  \let\!M=\!MAH
  \ignorespaces}
  \def\!halfhead#1#2#3{%
    \!dimenC=-#1%
    \divide \!dimenC 2 
    \!dimenD=#2\!dimenC
    \!rotate(\!dimenC,\!dimenD)by(\!!cos,\!!sin)to(\!xM,\!yM)
    \!dimenC=-#1
    \!dimenD=#3\!dimenC
    \!dimenD=.5\!dimenD
    \!rotate(\!dimenC,\!dimenD)by(\!!cos,\!!sin)to(\!xE,\!yE)
    \!start (\!xshift,\!yshift)
    \advance\!xM\!xshift  \advance\!yM\!yshift
    \advance\!xE\!xshift  \advance\!yE\!yshift
    \!qjoin (\!xM,\!yM) (\!xE,\!yE) 
    \ignorespaces}
\def\betweenarrows #1#2 from #3 #4 to #5 #6 {%
  \!xloc=\!M{#3}\!xunit  \!xxloc=\!M{#5}\!xunit%
  \!yloc=\!M{#4}\!yunit  \!yyloc=\!M{#6}\!yunit%
  \!dxpos=\!xxloc  \advance\!dxpos by -\!xloc
  \!dypos=\!yyloc  \advance\!dypos by -\!yloc
  \advance\!xloc .5\!dxpos
  \advance\!yloc .5\!dypos
  \let\!MBA=\!M
  \!setdimenmode
  \ifdim\!dypos=\!zpt
    \ifdim\!dxpos<\!zpt \!dxpos=-\!dxpos \fi
    \put {\!lrarrows{\!dxpos}{#1}}#2{} at {\!xloc} {\!yloc}
  \else
    \ifdim\!dxpos=\!zpt
      \ifdim\!dypos<\!zpt \!dypos=-\!zpt \fi
      \put {\!udarrows{\!dypos}{#1}}#2{} at {\!xloc} {\!yloc}
    \fi
  \fi
  \let\!M=\!MBA
  \ignorespaces}
\def\!lrarrows#1#2{
  {\setbox\!boxA=\hbox{$\mkern-2mu\mathord-\mkern-2mu$}%
   \setbox\!boxB=\hbox{$\leftarrow$}\!dimenE=\ht\!boxB
   \setbox\!boxB=\hbox{}\ht\!boxB=2\!dimenE
   \hbox to #1{$\mathord\leftarrow\mkern-6mu
     \cleaders\copy\!boxA\hfil
     \mkern-6mu\mathord-$%
     \kern.4em $\vcenter{\box\!boxB}$$\vcenter{\hbox{#2}}$\kern.4em
     $\mathord-\mkern-6mu
     \cleaders\copy\!boxA\hfil
     \mkern-6mu\mathord\rightarrow$}}}
\def\!udarrows#1#2{
  {\setbox\!boxB=\hbox{#2}%
   \setbox\!boxA=\hbox to \wd\!boxB{\hss$\vert$\hss}%
   \!dimenE=\ht\!boxA \advance\!dimenE \dp\!boxA \divide\!dimenE 2
   \vbox to #1{\offinterlineskip
      \vskip .05556\!dimenE
      \hbox to \wd\!boxB{\hss$\mkern.4mu\uparrow$\hss}\vskip-\!dimenE
      \cleaders\copy\!boxA\vfil
      \vskip-\!dimenE\copy\!boxA
      \vskip\!dimenE\copy\!boxB\vskip.4em
      \copy\!boxA\vskip-\!dimenE
      \cleaders\copy\!boxA\vfil
      \vskip-\!dimenE \hbox to \wd\!boxB{\hss$\mkern.4mu\downarrow$\hss}
      \vskip .05556\!dimenE}}}
\def\putbar#1breadth <#2> from #3 #4 to #5 #6 {%
  \!xloc=\!M{#3}\!xunit  \!xxloc=\!M{#5}\!xunit%
  \!yloc=\!M{#4}\!yunit  \!yyloc=\!M{#6}\!yunit%
  \!dypos=\!yyloc  \advance\!dypos by -\!yloc
  \!dimenI=#2  
  \ifdim \!dimenI=\!zpt 
    \putrule#1from {#3} {#4} to {#5} {#6} 
  \else 
    \let\!MBar=\!M
    \!setdimenmode 
    \divide\!dimenI 2
    \ifdim \!dypos=\!zpt             
      \advance \!yloc -\!dimenI 
      \advance \!yyloc \!dimenI
    \else
      \advance \!xloc -\!dimenI 
      \advance \!xxloc \!dimenI
    \fi
    \putrectangle#1corners at {\!xloc} {\!yloc} and {\!xxloc} {\!yyloc}
    \let\!M=\!MBar 
  \fi
  \ignorespaces}
\def\setbars#1breadth <#2> baseline at #3 = #4 {%
  \edef\!barshift{#1}%
  \edef\!barbreadth{#2}%
  \edef\!barorientation{#3}%
  \edef\!barbaseline{#4}%
  \def\!bardobaselabel{\!bardoendlabel}%
  \def\!bardoendlabel{\!barfinish}%
  \let\!drawcurve=\!barcurve
  \!setbars}
\def\!setbars{%
  \futurelet\!nextchar\!!setbars}
\def\!!setbars{%
  \if b\!nextchar
    \def\!!!setbars{\!setbarsbget}%
  \else 
    \if e\!nextchar
      \def\!!!setbars{\!setbarseget}%
    \else
      \def\!!!setbars{\relax}%
    \fi
  \fi
  \!!!setbars}
\def\!setbarsbget baselabels (#1) {%
  \def\!barbaselabelorientation{#1}%
  \def\!bardobaselabel{\!!bardobaselabel}%
  \!setbars}
\def\!setbarseget endlabels (#1) {%
  \edef\!barendlabelorientation{#1}%
  \def\!bardoendlabel{\!!bardoendlabel}%
  \!setbars}
\def\!barcurve #1 #2 {%
  \if y\!barorientation
    \def\!basexarg{#1}%
    \def\!baseyarg{\!barbaseline}%
  \else
    \def\!basexarg{\!barbaseline}%
    \def\!baseyarg{#2}%
  \fi
  \expandafter\putbar\!barshift breadth <\!barbreadth> from {\!basexarg}
    {\!baseyarg} to {#1} {#2}
  \def\!endxarg{#1}%
  \def\!endyarg{#2}%
  \!bardobaselabel}
\def\!!bardobaselabel "#1" {%
  \put {#1}\!barbaselabelorientation{} at {\!basexarg} {\!baseyarg}
  \!bardoendlabel}
\def\!!bardoendlabel "#1" {%
  \put {#1}\!barendlabelorientation{} at {\!endxarg} {\!endyarg}
  \!barfinish}
\def\!barfinish{%
  \!ifnextchar/{\!finish}{\!barcurve}}
\def\putrectangle{%
  \!ifnextchar<{\!putrectangle}{\!putrectangle<\!zpt,\!zpt> }}
\def\!putrectangle<#1,#2> corners at #3 #4 and #5 #6 {%
%
  \!xone=\!M{#3}\!xunit  \!xtwo=\!M{#5}\!xunit%
  \!yone=\!M{#4}\!yunit  \!ytwo=\!M{#6}\!yunit%
  \ifdim \!xtwo<\!xone
    \!dimenI=\!xone  \!xone=\!xtwo  \!xtwo=\!dimenI
  \fi
  \ifdim \!ytwo<\!yone
    \!dimenI=\!yone  \!yone=\!ytwo  \!ytwo=\!dimenI
  \fi
  \!dimenI=#1\relax  \advance\!xone\!dimenI  \advance\!xtwo\!dimenI
  \!dimenI=#2\relax  \advance\!yone\!dimenI  \advance\!ytwo\!dimenI
  \let\!MRect=\!M
  \!setdimenmode
%
  \!shaderectangle
%
  \!dimenI=.5\linethickness
  \advance \!xone  -\!dimenI
  \advance \!xtwo   \!dimenI
  \putrule from {\!xone} {\!yone} to {\!xtwo} {\!yone} 
  \putrule from {\!xone} {\!ytwo} to {\!xtwo} {\!ytwo} 
%
  \advance \!xone   \!dimenI
  \advance \!xtwo  -\!dimenI%
  \advance \!yone  -\!dimenI
  \advance \!ytwo   \!dimenI
  \putrule from {\!xone} {\!yone} to {\!xone} {\!ytwo} 
  \putrule from {\!xtwo} {\!yone} to {\!xtwo} {\!ytwo} 
  \let\!M=\!MRect
  \ignorespaces}
\def\shaderectanglesoff{%
  \def\!shaderectangle{}%
  \ignorespaces}
\def\!!shaderectangle{%
  \!dimenA=\!xtwo  \advance \!dimenA -\!xone
  \!dimenB=\!ytwo  \advance \!dimenB -\!yone
  \ifdim \!dimenA<\!dimenB
    \!startvshade (\!xone,\!yone,\!ytwo)
    \!lshade      (\!xtwo,\!yone,\!ytwo)
  \else
    \!starthshade (\!yone,\!xone,\!xtwo)
    \!lshade      (\!ytwo,\!xone,\!xtwo)
  \fi
  \ignorespaces}
\def\frame{%
  \!ifnextchar<{\!frame}{\!frame<\!zpt> }}
\long\def\!frame<#1> #2{%
  \beginpicture
    \setcoordinatesystem units <1pt,1pt> point at 0 0 
    \put {#2} [Bl] at 0 0 
    \!dimenA=#1\relax
    \!dimenB=\!wd \advance \!dimenB \!dimenA
    \!dimenC=\!ht \advance \!dimenC \!dimenA
    \!dimenD=\!dp \advance \!dimenD \!dimenA
    \let\!MFr=\!M
    \!setdimenmode
    \putrectangle corners at {-\!dimenA} {-\!dimenD} and {\!dimenB} {\!dimenC}
    \!setcoordmode
    \let\!M=\!MFr
  \endpicture
  \ignorespaces}
\def\rectangle <#1> <#2> {%
  \setbox0=\hbox{}\wd0=#1\ht0=#2\frame {\box0}}
\def\plot{%
  \!ifnextchar"{\!plotfromfile}{\!drawcurve}}
\def\!plotfromfile"#1"{%
  \expandafter\!drawcurve \input #1 /}
\def\setquadratic{%
  \let\!drawcurve=\!qcurve
  \let\!!Shade=\!!qShade
  \let\!!!Shade=\!!!qShade}
\def\setlinear{%
  \let\!drawcurve=\!lcurve
  \let\!!Shade=\!!lShade
  \let\!!!Shade=\!!!lShade}
\def\sethistograms{%
  \let\!drawcurve=\!hcurve}
\def\!qcurve #1 #2 {%
  \!start (#1,#2)
  \!Qjoin}
\def\!Qjoin#1 #2 #3 #4 {%
  \!qjoin (#1,#2) (#3,#4)             
  \!ifnextchar/{\!finish}{\!Qjoin}}
\def\!lcurve #1 #2 {%
  \!start (#1,#2)
  \!Ljoin}
\def\!Ljoin#1 #2 {%
  \!ljoin (#1,#2)                    
  \!ifnextchar/{\!finish}{\!Ljoin}}
\def\!finish/{\ignorespaces}
\def\!hcurve #1 #2 {%
  \edef\!hxS{#1}%
  \edef\!hyS{#2}%
  \!hjoin}
\def\!hjoin#1 #2 {%
  \putrectangle corners at {\!hxS} {\!hyS} and {#1} {#2}
  \edef\!hxS{#1}%
  \!ifnextchar/{\!finish}{\!hjoin}}
\def\vshade #1 #2 #3 {%
  \!startvshade (#1,#2,#3)
  \!Shadewhat}
\def\hshade #1 #2 #3 {%
  \!starthshade (#1,#2,#3)
  \!Shadewhat}
\def\!Shadewhat{%
  \futurelet\!nextchar\!Shade}
\def\!Shade{%
  \if <\!nextchar
    \def\!nextShade{\!!Shade}%
  \else
    \if /\!nextchar
      \def\!nextShade{\!finish}%
    \else
      \def\!nextShade{\!!!Shade}%
    \fi
  \fi
  \!nextShade}
\def\!!lShade<#1> #2 #3 #4 {%
  \!lshade <#1> (#2,#3,#4)                 
  \!Shadewhat}
\def\!!!lShade#1 #2 #3 {%
  \!lshade (#1,#2,#3)
  \!Shadewhat} 
\def\!!qShade<#1> #2 #3 #4 #5 #6 #7 {%
  \!qshade <#1> (#2,#3,#4) (#5,#6,#7)      
  \!Shadewhat}
\def\!!!qShade#1 #2 #3 #4 #5 #6 {%
  \!qshade (#1,#2,#3) (#4,#5,#6)
  \!Shadewhat} 
\def\setdashpattern <#1>{%
  \def\!Flist{}\def\!Blist{}\def\!UDlist{}%
  \!countA=0
  \!ecfor\!item:=#1\do{%
    \!dimenA=\!item\relax
    \expandafter\!rightappend\the\!dimenA\withCS{\\}\to\!UDlist%
    \advance\!countA  1
    \ifodd\!countA
      \expandafter\!rightappend\the\!dimenA\withCS{\!Rule}\to\!Flist%
      \expandafter\!leftappend\the\!dimenA\withCS{\!Rule}\to\!Blist%
    \else 
      \expandafter\!rightappend\the\!dimenA\withCS{\!Skip}\to\!Flist%
      \expandafter\!leftappend\the\!dimenA\withCS{\!Skip}\to\!Blist%
    \fi}%
  \!leaderlength=\!zpt
  \def\!Rule##1{\advance\!leaderlength  ##1}%
  \def\!Skip##1{\advance\!leaderlength  ##1}%
  \!Flist%
  \ifdim\!leaderlength>\!zpt 
  \else
    \def\!Flist{\!Skip{24in}}\def\!Blist{\!Skip{24in}}\ignorespaces
    \def\!UDlist{\\{\!zpt}\\{24in}}\ignorespaces
    \!leaderlength=24in
  \fi
  \!dashingon}
\def\!dashingon{%
  \def\!advancedashing{\!!advancedashing}%
  \def\!drawlinearsegment{\!lineardashed}%
  \def\!puthline{\!putdashedhline}%
  \def\!putvline{\!putdashedvline}%
  \ignorespaces}%
\def\!dashingoff{%
  \def\!advancedashing{\relax}%
  \def\!drawlinearsegment{\!linearsolid}%
  \def\!puthline{\!putsolidhline}%
  \def\!putvline{\!putsolidvline}%
  \ignorespaces}
\def\setdots{%
  \!ifnextchar<{\!setdots}{\!setdots<5pt>}}
\def\!setdots<#1>{%
  \!dimenB=#1\advance\!dimenB -\plotsymbolspacing
  \ifdim\!dimenB<\!zpt
    \!dimenB=\!zpt
  \fi
\setdashpattern <\plotsymbolspacing,\!dimenB>}
\def\setdotsnear <#1> for <#2>{%
  \!dimenB=#2\relax  \advance\!dimenB -.05pt  
  \!dimenC=#1\relax  \!countA=\!dimenC 
  \!dimenD=\!dimenB  \advance\!dimenD .5\!dimenC  \!countB=\!dimenD
  \divide \!countB  \!countA
  \ifnum 1>\!countB 
    \!countB=1
  \fi
  \divide\!dimenB  \!countB
  \setdots <\!dimenB>}
\def\setdashes{%
  \!ifnextchar<{\!setdashes}{\!setdashes<5pt>}}
\def\!setdashes<#1>{\setdashpattern <#1,#1>}
\def\setdashesnear <#1> for <#2>{%
  \!dimenB=#2\relax  
  \!dimenC=#1\relax  \!countA=\!dimenC 
  \!dimenD=\!dimenB  \advance\!dimenD .5\!dimenC  \!countB=\!dimenD
  \divide \!countB  \!countA
  \ifodd \!countB 
  \else 
    \advance \!countB  1
  \fi
  \divide\!dimenB  \!countB
  \setdashes <\!dimenB>}
\def\setsolid{%
  \def\!Flist{\!Rule{24in}}\def\!Blist{\!Rule{24in}}%
  \def\!UDlist{\\{24in}\\{\!zpt}}%
  \!dashingoff}  
\def\!divide#1#2#3{%
  \!dimenB=#1
  \!dimenC=#2
  \!dimenD=\!dimenB
  \divide \!dimenD \!dimenC
  \!dimenA=\!dimenD
  \multiply\!dimenD \!dimenC
  \advance\!dimenB -\!dimenD
  \!dimenD=\!dimenC
    \ifdim\!dimenD<\!zpt \!dimenD=-\!dimenD 
  \fi
  \ifdim\!dimenD<64pt
    \!divstep[\!tfs]\!divstep[\!tfs]%
  \else 
    \!!divide
  \fi
  #3=\!dimenA\ignorespaces}
\def\!!divide{%
  \ifdim\!dimenD<256pt
    \!divstep[64]\!divstep[32]\!divstep[32]%
  \else 
    \!divstep[8]\!divstep[8]\!divstep[8]\!divstep[8]\!divstep[8]%
    \!dimenA=2\!dimenA
  \fi}
\def\!divstep[#1]{
  \!dimenB=#1\!dimenB
  \!dimenD=\!dimenB
    \divide \!dimenD by \!dimenC
  \!dimenA=#1\!dimenA
    \advance\!dimenA by \!dimenD%
  \multiply\!dimenD by \!dimenC
    \advance\!dimenB by -\!dimenD}
\def\Divide <#1> by <#2> forming <#3> {%
  \!divide{#1}{#2}{#3}}
\def\ellipticalarc axes ratio #1:#2 #3 degrees from #4 #5 center at #6 #7 {%
  \!angle=#3pt\relax
  \ifdim\!angle>\!zpt 
    \def\!sign{}
  \else 
    \def\!sign{-}\!angle=-\!angle
  \fi
  \!xxloc=\!M{#6}\!xunit
  \!yyloc=\!M{#7}\!yunit     
  \!xxS=\!M{#4}\!xunit
  \!yyS=\!M{#5}\!yunit
  \advance\!xxS -\!xxloc
  \advance\!yyS -\!yyloc
  \!divide\!xxS{#1pt}\!xxS 
  \!divide\!yyS{#2pt}\!yyS 
  \let\!MC=\!M
  \!setdimenmode
  \!xS=#1\!xxS  \advance\!xS\!xxloc
  \!yS=#2\!yyS  \advance\!yS\!yyloc
  \!start (\!xS,\!yS)%
  \!loop\ifdim\!angle>14.9999pt
    \!rotate(\!xxS,\!yyS)by(\!cos,\!sign\!sin)to(\!xxM,\!yyM) 
    \!rotate(\!xxM,\!yyM)by(\!cos,\!sign\!sin)to(\!xxE,\!yyE)
    \!xM=#1\!xxM  \advance\!xM\!xxloc  \!yM=#2\!yyM  \advance\!yM\!yyloc
    \!xE=#1\!xxE  \advance\!xE\!xxloc  \!yE=#2\!yyE  \advance\!yE\!yyloc
    \!qjoin (\!xM,\!yM) (\!xE,\!yE)
    \!xxS=\!xxE  \!yyS=\!yyE 
    \advance \!angle -15pt
  \repeat
  \ifdim\!angle>\!zpt
    \!angle=100.53096\!angle
    \divide \!angle 360 
    \!sinandcos\!angle\!!sin\!!cos
    \!rotate(\!xxS,\!yyS)by(\!!cos,\!sign\!!sin)to(\!xxM,\!yyM) 
    \!rotate(\!xxM,\!yyM)by(\!!cos,\!sign\!!sin)to(\!xxE,\!yyE)
    \!xM=#1\!xxM  \advance\!xM\!xxloc  \!yM=#2\!yyM  \advance\!yM\!yyloc
    \!xE=#1\!xxE  \advance\!xE\!xxloc  \!yE=#2\!yyE  \advance\!yE\!yyloc
    \!qjoin (\!xM,\!yM) (\!xE,\!yE)
  \fi
  \let\!M=\!MC
  \ignorespaces}
\def\!rotate(#1,#2)by(#3,#4)to(#5,#6){%
  \!dimenA=#3#1\advance \!dimenA -#4#2
  \!dimenB=#3#2\advance \!dimenB  #4#1
  \divide \!dimenA 32  \divide \!dimenB 32 
  #5=\!dimenA  #6=\!dimenB
  \ignorespaces}
\def\!sin{4.17684}
\def\!cos{31.72624}
\def\!sinandcos#1#2#3{%
 \!dimenD=#1
 \!dimenA=\!dimenD
 \!dimenB=32pt
 \!removept\!dimenD\!value
 \!dimenC=\!dimenD
 \!dimenC=\!value\!dimenC \divide\!dimenC by 64 
 \advance\!dimenB by -\!dimenC
 \!dimenC=\!value\!dimenC \divide\!dimenC by 96 
 \advance\!dimenA by -\!dimenC
 \!dimenC=\!value\!dimenC \divide\!dimenC by 128 
 \advance\!dimenB by \!dimenC%
 \!removept\!dimenA#2
 \!removept\!dimenB#3
 \ignorespaces}
\def\putrule#1from #2 #3 to #4 #5 {%
  \!xloc=\!M{#2}\!xunit  \!xxloc=\!M{#4}\!xunit%
  \!yloc=\!M{#3}\!yunit  \!yyloc=\!M{#5}\!yunit%
  \!dxpos=\!xxloc  \advance\!dxpos by -\!xloc
  \!dypos=\!yyloc  \advance\!dypos by -\!yloc
  \ifdim\!dypos=\!zpt
    \def\!!Line{\!puthline{#1}}\ignorespaces
  \else
    \ifdim\!dxpos=\!zpt
      \def\!!Line{\!putvline{#1}}\ignorespaces
    \else 
       \def\!!Line{}
    \fi
  \fi
  \let\!ML=\!M
  \!setdimenmode
  \!!Line%
  \let\!M=\!ML
  \ignorespaces}
\def\!putsolidhline#1{%
  \ifdim\!dxpos>\!zpt 
    \put{\!hline\!dxpos}#1[l] at {\!xloc} {\!yloc}
  \else 
    \put{\!hline{-\!dxpos}}#1[l] at {\!xxloc} {\!yyloc}
  \fi
  \ignorespaces}
\def\!putsolidvline#1{%
  \ifdim\!dypos>\!zpt 
    \put{\!vline\!dypos}#1[b] at {\!xloc} {\!yloc}
  \else 
    \put{\!vline{-\!dypos}}#1[b] at {\!xxloc} {\!yyloc}
  \fi
  \ignorespaces}
\def\!hline#1{\hbox to #1{\leaders \hrule height\linethickness\hfill}}
\def\!vline#1{\vbox to #1{\leaders \vrule width\linethickness\vfill}}
\def\!putdashedhline#1{%
  \ifdim\!dxpos>\!zpt 
    \!DLsetup\!Flist\!dxpos
    \put{\hbox to \!totalleaderlength{\!hleaders}\!hpartialpattern\!Rtrunc}
      #1[l] at {\!xloc} {\!yloc} 
  \else 
    \!DLsetup\!Blist{-\!dxpos}
    \put{\!hpartialpattern\!Ltrunc\hbox to \!totalleaderlength{\!hleaders}}
      #1[r] at {\!xloc} {\!yloc} 
  \fi
  \ignorespaces}
\def\!putdashedvline#1{%
  \!dypos=-\!dypos
  \ifdim\!dypos>\!zpt 
    \!DLsetup\!Flist\!dypos 
    \put{\vbox{\vbox to \!totalleaderlength{\!vleaders}
      \!vpartialpattern\!Rtrunc}}#1[t] at {\!xloc} {\!yloc} 
  \else 
    \!DLsetup\!Blist{-\!dypos}
    \put{\vbox{\!vpartialpattern\!Ltrunc
      \vbox to \!totalleaderlength{\!vleaders}}}#1[b] at {\!xloc} {\!yloc} 
  \fi
  \ignorespaces}
\def\!DLsetup#1#2{
  \let\!RSlist=#1
  \!countB=#2
  \!countA=\!leaderlength
  \divide\!countB by \!countA
  \!totalleaderlength=\!countB\!leaderlength
  \!Rresiduallength=#2%
  \advance \!Rresiduallength by -\!totalleaderlength
  \!Lresiduallength=\!leaderlength
  \advance \!Lresiduallength by -\!Rresiduallength
  \ignorespaces}
\def\!hleaders{%
  \def\!Rule##1{\vrule height\linethickness width##1}%
  \def\!Skip##1{\hskip##1}%
  \leaders\hbox{\!RSlist}\hfill}
\def\!hpartialpattern#1{%
  \!dimenA=\!zpt \!dimenB=\!zpt 
  \def\!Rule##1{#1{##1}\vrule height\linethickness width\!dimenD}%
  \def\!Skip##1{#1{##1}\hskip\!dimenD}%
  \!RSlist}
\def\!vleaders{%
  \def\!Rule##1{\hrule width\linethickness height##1}%
  \def\!Skip##1{\vskip##1}%
  \leaders\vbox{\!RSlist}\vfill}
\def\!vpartialpattern#1{%
  \!dimenA=\!zpt \!dimenB=\!zpt 
  \def\!Rule##1{#1{##1}\hrule width\linethickness height\!dimenD}%
  \def\!Skip##1{#1{##1}\vskip\!dimenD}%
  \!RSlist}
\def\!Rtrunc#1{\!trunc{#1}>\!Rresiduallength}
\def\!Ltrunc#1{\!trunc{#1}<\!Lresiduallength}
\def\!trunc#1#2#3{%
  \!dimenA=\!dimenB         
  \advance\!dimenB by #1%
  \!dimenD=\!dimenB  \ifdim\!dimenD#2#3\!dimenD=#3\fi
  \!dimenC=\!dimenA  \ifdim\!dimenC#2#3\!dimenC=#3\fi
  \advance \!dimenD by -\!dimenC}
\def\!start (#1,#2){%
  \!plotxorigin=\!xorigin  \advance \!plotxorigin by \!plotsymbolxshift
  \!plotyorigin=\!yorigin  \advance \!plotyorigin by \!plotsymbolyshift
  \!xS=\!M{#1}\!xunit \!yS=\!M{#2}\!yunit
  \!rotateaboutpivot\!xS\!yS
  \!copylist\!UDlist\to\!!UDlist
  \!getnextvalueof\!downlength\from\!!UDlist
  \!distacross=\!zpt
  \!intervalno=0 
  \global\totalarclength=\!zpt
  \ignorespaces}
\def\!ljoin (#1,#2){%
  \advance\!intervalno by 1
  \!xE=\!M{#1}\!xunit \!yE=\!M{#2}\!yunit
  \!rotateaboutpivot\!xE\!yE
  \!xdiff=\!xE \advance \!xdiff by -\!xS
  \!ydiff=\!yE \advance \!ydiff by -\!yS
  \!Pythag\!xdiff\!ydiff\!arclength
  \global\advance \totalarclength by \!arclength%
  \!drawlinearsegment
  \!xS=\!xE \!yS=\!yE
  \ignorespaces}
\def\!linearsolid{%
  \!npoints=\!arclength
  \!countA=\plotsymbolspacing
  \divide\!npoints by \!countA
  \ifnum \!npoints<1 
    \!npoints=1 
  \fi
  \divide\!xdiff by \!npoints
  \divide\!ydiff by \!npoints
  \!xpos=\!xS \!ypos=\!yS
  \loop\ifnum\!npoints>-1
    \!plotifinbounds
    \advance \!xpos by \!xdiff
    \advance \!ypos by \!ydiff
    \advance \!npoints by -1
  \repeat
  \ignorespaces}
\def\!lineardashed{%
  \ifdim\!distacross>\!arclength
    \advance \!distacross by -\!arclength  
  \else
    \loop\ifdim\!distacross<\!arclength
      \!divide\!distacross\!arclength\!dimenA
      \!removept\!dimenA\!t
      \!xpos=\!t\!xdiff \advance \!xpos by \!xS
      \!ypos=\!t\!ydiff \advance \!ypos by \!yS
      \!plotifinbounds
      \advance\!distacross by \plotsymbolspacing
      \!advancedashing
    \repeat  
    \advance \!distacross by -\!arclength
  \fi
  \ignorespaces}
\def\!!advancedashing{%
  \advance\!downlength by -\plotsymbolspacing
  \ifdim \!downlength>\!zpt
  \else
    \advance\!distacross by \!downlength
    \!getnextvalueof\!uplength\from\!!UDlist
    \advance\!distacross by \!uplength
    \!getnextvalueof\!downlength\from\!!UDlist
  \fi}
\def\inboundscheckoff{%
  \def\!plotifinbounds{\!plot(\!xpos,\!ypos)}%
  \def\!initinboundscheck{\relax}\ignorespaces}
\def\!!plotifinbounds{%
  \ifdim \!xpos<\!checkleft
  \else
    \ifdim \!xpos>\!checkright
    \else
      \ifdim \!ypos<\!checkbot
      \else
         \ifdim \!ypos>\!checktop
         \else
           \!plot(\!xpos,\!ypos)
         \fi 
      \fi
    \fi
  \fi}
\def\!!initinboundscheck{%
  \!checkleft=\!arealloc     \advance\!checkleft by \!xorigin
  \!checkright=\!arearloc    \advance\!checkright by \!xorigin
  \!checkbot=\!areabloc      \advance\!checkbot by \!yorigin
  \!checktop=\!areatloc      \advance\!checktop by \!yorigin}
\def\!logten#1#2{%
  \expandafter\!!logten#1\!nil
  \!removept\!dimenF#2%
  \ignorespaces}
\def\!!logten#1#2\!nil{%
  \if -#1%
    \!dimenF=\!zpt
    \def\!next{\ignorespaces}%
  \else
    \if +#1%
      \def\!next{\!!logten#2\!nil}%
    \else
      \if .#1%
        \def\!next{\!!logten0.#2\!nil}%
      \else
        \def\!next{\!!!logten#1#2..\!nil}%
      \fi
    \fi
  \fi
  \!next}
\def\!!!logten#1#2.#3.#4\!nil{%
  \!dimenF=1pt 
  \if 0#1%
    \!!logshift#3pt 
  \else 
    \!logshift#2/
    \!dimenE=#1.#2#3pt 
  \fi 
  \ifdim \!dimenE<\!rootten
    \multiply \!dimenE 10 
    \advance  \!dimenF -1pt
  \fi
  \!dimenG=\!dimenE
    \advance\!dimenG 10pt
  \advance\!dimenE -10pt 
  \multiply\!dimenE 10 
  \!divide\!dimenE\!dimenG\!dimenE
  \!removept\!dimenE\!t
  \!dimenG=\!t\!dimenE
  \!removept\!dimenG\!tt
  \!dimenH=\!tt\!tenAe
    \divide\!dimenH 100
  \advance\!dimenH \!tenAc
  \!dimenH=\!tt\!dimenH
    \divide\!dimenH 100   
  \advance\!dimenH \!tenAa
  \!dimenH=\!t\!dimenH
    \divide\!dimenH 100 
  \advance\!dimenF \!dimenH}
\def\!logshift#1{%
  \if #1/%
    \def\!next{\ignorespaces}%
  \else
    \advance\!dimenF 1pt 
    \def\!next{\!logshift}%
  \fi 
  \!next}
 \def\!!logshift#1{%
   \advance\!dimenF -1pt
   \if 0#1%
     \def\!next{\!!logshift}%
   \else
     \if p#1%
       \!dimenF=1pt
       \def\!next{\!dimenE=1p}%
     \else
       \def\!next{\!dimenE=#1.}%
     \fi
   \fi
   \!next}
\def\beginpicture{%
  \setbox\!picbox=\hbox\bgroup%
  \!xleft=\maxdimen  
  \!xright=-\maxdimen
  \!ybot=\maxdimen
  \!ytop=-\maxdimen}
\def\endpicture{%
  \ifdim\!xleft=\maxdimen
    \!xleft=\!zpt \!xright=\!zpt \!ybot=\!zpt \!ytop=\!zpt 
  \fi
  \global\!Xleft=\!xleft \global\!Xright=\!xright
  \global\!Ybot=\!ybot \global\!Ytop=\!ytop
  \egroup%
  \ht\!picbox=\!Ytop  \dp\!picbox=-\!Ybot
  \ifdim\!Ybot>\!zpt
  \else 
    \ifdim\!Ytop<\!zpt
      \!Ybot=\!Ytop
    \else
      \!Ybot=\!zpt
    \fi
  \fi
  \hbox{\kern-\!Xleft\lower\!Ybot\box\!picbox\kern\!Xright}}
\def\endpicturesave <#1,#2>{%
  \endpicture \global #1=\!Xleft \global #2=\!Ybot \ignorespaces}
\def\setcoordinatesystem{%
  \!ifnextchar{u}{\!getlengths }
    {\!getlengths units <\!xunit,\!yunit>}}
\def\!getlengths units <#1,#2>{%
  \!xunit=#1\relax
  \!yunit=#2\relax
  \!ifcoordmode 
    \let\!SCnext=\!SCccheckforRP
  \else
    \let\!SCnext=\!SCdcheckforRP
  \fi
  \!SCnext}
\def\!SCccheckforRP{%
  \!ifnextchar{p}{\!cgetreference }
    {\!cgetreference point at {\!xref} {\!yref} }}
\def\!cgetreference point at #1 #2 {%
  \edef\!xref{#1}\edef\!yref{#2}%
  \!xorigin=\!xref\!xunit  \!yorigin=\!yref\!yunit  
  \!initinboundscheck 
  \ignorespaces}
\def\!SCdcheckforRP{%
  \!ifnextchar{p}{\!dgetreference}%
    {\ignorespaces}}
\def\!dgetreference point at #1 #2 {%
  \!xorigin=#1\relax  \!yorigin=#2\relax
  \ignorespaces}
\long\def\put#1#2 at #3 #4 {%
  \!setputobject{#1}{#2}%
  \!xpos=\!M{#3}\!xunit  \!ypos=\!M{#4}\!yunit  
  \!rotateaboutpivot\!xpos\!ypos%
  \advance\!xpos -\!xorigin  \advance\!xpos -\!xshift
  \advance\!ypos -\!yorigin  \advance\!ypos -\!yshift
  \kern\!xpos\raise\!ypos\box\!putobject\kern-\!xpos%
  \!doaccounting\ignorespaces}
\long\def\multiput #1#2 at {%
  \!setputobject{#1}{#2}%
  \!ifnextchar"{\!putfromfile}{\!multiput}}
\def\!putfromfile"#1"{%
  \expandafter\!multiput \input #1 /}
\def\!multiput{%
  \futurelet\!nextchar\!!multiput}
\def\!!multiput{%
  \if *\!nextchar
    \def\!nextput{\!alsoby}%
  \else
    \if /\!nextchar
      \def\!nextput{\!finishmultiput}%
    \else
      \def\!nextput{\!alsoat}%
    \fi
  \fi
  \!nextput}
\def\!finishmultiput/{%
  \setbox\!putobject=\hbox{}%
  \ignorespaces}
\def\!alsoat#1 #2 {%
  \!xpos=\!M{#1}\!xunit  \!ypos=\!M{#2}\!yunit  
  \!rotateaboutpivot\!xpos\!ypos%
  \advance\!xpos -\!xorigin  \advance\!xpos -\!xshift
  \advance\!ypos -\!yorigin  \advance\!ypos -\!yshift
  \kern\!xpos\raise\!ypos\copy\!putobject\kern-\!xpos%
  \!doaccounting
  \!multiput}
\def\!alsoby*#1 #2 #3 {%
  \!dxpos=\!M{#2}\!xunit \!dypos=\!M{#3}\!yunit 
  \!rotateonly\!dxpos\!dypos
  \!ntemp=#1%
  \!!loop\ifnum\!ntemp>0
    \advance\!xpos by \!dxpos  \advance\!ypos by \!dypos
    \kern\!xpos\raise\!ypos\copy\!putobject\kern-\!xpos%
    \advance\!ntemp by -1
  \repeat
  \!doaccounting 
  \!multiput}
\def\accountingon{\def\!doaccounting{\!!doaccounting}\ignorespaces}
\def\!!doaccounting{%
  \!xtemp=\!xpos  
  \!ytemp=\!ypos
  \ifdim\!xtemp<\!xleft 
     \!xleft=\!xtemp 
  \fi
  \advance\!xtemp by  \!wd 
  \ifdim\!xright<\!xtemp 
    \!xright=\!xtemp
  \fi
  \advance\!ytemp by -\!dp
  \ifdim\!ytemp<\!ybot  
    \!ybot=\!ytemp
  \fi
  \advance\!ytemp by  \!dp
  \advance\!ytemp by  \!ht 
  \ifdim\!ytemp>\!ytop  
    \!ytop=\!ytemp  
  \fi}
\long\def\!setputobject#1#2{%
  \setbox\!putobject=\hbox{#1}%
  \!ht=\ht\!putobject  \!dp=\dp\!putobject  \!wd=\wd\!putobject
  \wd\!putobject=\!zpt
  \!xshift=.5\!wd   \!yshift=.5\!ht   \advance\!yshift by -.5\!dp
  \edef\!putorientation{#2}%
  \expandafter\!SPOreadA\!putorientation[]\!nil%
  \expandafter\!SPOreadB\!putorientation<\!zpt,\!zpt>\!nil\ignorespaces}
\def\!SPOreadA#1[#2]#3\!nil{\!etfor\!orientation:=#2\do\!SPOreviseshift}
\def\!SPOreadB#1<#2,#3>#4\!nil{\advance\!xshift by -#2\advance\!yshift by -#3}
\def\!SPOreviseshift{%
  \if l\!orientation 
    \!xshift=\!zpt
  \else 
    \if r\!orientation 
      \!xshift=\!wd
    \else 
      \if b\!orientation
        \!yshift=-\!dp
      \else 
        \if B\!orientation 
          \!yshift=\!zpt
        \else 
          \if t\!orientation 
            \!yshift=\!ht
          \fi 
        \fi
      \fi
    \fi
  \fi}
\long\def\!dimenput#1#2(#3,#4){%
  \!setputobject{#1}{#2}%
  \!xpos=#3\advance\!xpos by -\!xshift
  \!ypos=#4\advance\!ypos by -\!yshift
  \kern\!xpos\raise\!ypos\box\!putobject\kern-\!xpos%
  \!doaccounting\ignorespaces}
\def\!setdimenmode{%
  \let\!M=\!M!!\ignorespaces}
\def\!setcoordmode{%
  \let\!M=\!M!\ignorespaces}
\def\!ifcoordmode{%
  \ifx \!M \!M!}
\def\!ifdimenmode{%
  \ifx \!M \!M!!}
\def\!M!#1#2{#1#2} 
\def\!M!!#1#2{#1}
\let\setdimensionmode=\!setdimenmode
\let\setcoordinatemode=\!setcoordmode
\def\!stack[#1]{%
  \let\!lglue=\hfill \let\!rglue=\hfill
  \expandafter\let\csname !#1glue\endcsname=\relax
  \!ifnextchar<{\!!stack}{\!!stack<\stackleading>}}
\def\!!stack<#1>#2{%
  \vbox{\def\!valueslist{}\!ecfor\!value:=#2\do{%
    \expandafter\!rightappend\!value\withCS{\\}\to\!valueslist}%
    \!lop\!valueslist\to\!value
    \let\\=\cr\lineskiplimit=\maxdimen\lineskip=#1%
    \baselineskip=-1000pt\halign{\!lglue##\!rglue\cr \!value\!valueslist\cr}}%
  \ignorespaces}
\def\!lines[#1]#2{%
  \let\!lglue=\hfill \let\!rglue=\hfill
  \expandafter\let\csname !#1glue\endcsname=\relax
  \vbox{\halign{\!lglue##\!rglue\cr #2\crcr}}%
  \ignorespaces}
\def\!Lines[#1]#2{%
  \let\!lglue=\hfill \let\!rglue=\hfill
  \expandafter\let\csname !#1glue\endcsname=\relax
  \vtop{\halign{\!lglue##\!rglue\cr #2\crcr}}%
  \ignorespaces}
\def\setplotsymbol(#1#2){%
  \!setputobject{#1}{#2}
  \setbox\!plotsymbol=\box\!putobject%
  \!plotsymbolxshift=\!xshift 
  \!plotsymbolyshift=\!yshift 
  \ignorespaces}
\def\!!plot(#1,#2){%
  \!dimenA=-\!plotxorigin \advance \!dimenA by #1
  \!dimenB=-\!plotyorigin \advance \!dimenB by #2
  \kern\!dimenA\raise\!dimenB\copy\!plotsymbol\kern-\!dimenA%
  \ignorespaces}
\def\!!!plot(#1,#2){%
  \!dimenA=-\!plotxorigin \advance \!dimenA by #1
  \!dimenB=-\!plotyorigin \advance \!dimenB by #2
  \kern\!dimenA\raise\!dimenB\copy\!plotsymbol\kern-\!dimenA%
  \!countE=\!dimenA
  \!countF=\!dimenB
  \immediate\write\!replotfile{\the\!countE,\the\!countF.}%
  \ignorespaces}
\def\savelinesandcurves on "#1" {%
  \immediate\closeout\!replotfile
  \immediate\openout\!replotfile=#1%
  \let\!plot=\!!!plot}
\def\dontsavelinesandcurves {%
  \let\!plot=\!!plot}
\xdef\!Commentsignal{
\def\writesavefile#1 {%
  \immediate\write\!replotfile{\!Commentsignal #1}%
  \ignorespaces}

\def\replot"#1" {%
  \expandafter\!replot\input #1 /}
\def\!replot#1,#2. {%
  \!dimenA=#1sp
  \kern\!dimenA\raise#2sp\copy\!plotsymbol\kern-\!dimenA
  \futurelet\!nextchar\!!replot}
\def\!!replot{%
  \if /\!nextchar 
    \def\!next{\!finish}%
  \else
    \def\!next{\!replot}%
  \fi
  \!next}


 
 
\def\!Pythag#1#2#3{%
  \!dimenE=#1\relax                                     
  \ifdim\!dimenE<\!zpt 
    \!dimenE=-\!dimenE 
  \fi
  \!dimenF=#2\relax
  \ifdim\!dimenF<\!zpt 
    \!dimenF=-\!dimenF 
  \fi
  \advance \!dimenF by \!dimenE
  \ifdim\!dimenF=\!zpt 
    \!dimenG=\!zpt
  \else 
    \!divide{8\!dimenE}\!dimenF\!dimenE
    \advance\!dimenE by -4pt
      \!dimenE=2\!dimenE
    \!removept\!dimenE\!!t
    \!dimenE=\!!t\!dimenE
    \advance\!dimenE by 64pt
    \divide \!dimenE by 2
    \!dimenH=7pt
    \!!Pythag\!!Pythag\!!Pythag
    \!removept\!dimenH\!!t
    \!dimenG=\!!t\!dimenF
    \divide\!dimenG by 8
  \fi
  #3=\!dimenG
  \ignorespaces}

\def\!!Pythag{
  \!divide\!dimenE\!dimenH\!dimenI
  \advance\!dimenH by \!dimenI
    \divide\!dimenH by 2}

\def\placehypotenuse for <#1> and <#2> in <#3> {%
  \!Pythag{#1}{#2}{#3}}

 
 
 
\def\!qjoin (#1,#2) (#3,#4){%
  \advance\!intervalno by 1
  \!ifcoordmode
    \edef\!xmidpt{#1}\edef\!ymidpt{#2}%
  \else
    \!dimenA=#1\relax \edef\!xmidpt{\the\!dimenA}%
    \!dimenA=#2\relax \edef\!ymidpt{\the\!dimenA}%
  \fi
  \!xM=\!M{#1}\!xunit  \!yM=\!M{#2}\!yunit   \!rotateaboutpivot\!xM\!yM
  \!xE=\!M{#3}\!xunit  \!yE=\!M{#4}\!yunit   \!rotateaboutpivot\!xE\!yE
%
  \!dimenA=\!xM  \advance \!dimenA by -\!xS
  \!dimenB=\!xE  \advance \!dimenB by -\!xM
  \!xB=3\!dimenA \advance \!xB by -\!dimenB
  \!xC=2\!dimenB \advance \!xC by -2\!dimenA
%
  \!dimenA=\!yM  \advance \!dimenA by -\!yS%
  \!dimenB=\!yE  \advance \!dimenB by -\!yM%
  \!yB=3\!dimenA \advance \!yB by -\!dimenB%
  \!yC=2\!dimenB \advance \!yC by -2\!dimenA%
%
  \!xprime=\!xB  \!yprime=\!yB
  \!dxprime=.5\!xC  \!dyprime=.5\!yC
  \!getf \!midarclength=\!dimenA
  \!getf \advance \!midarclength by 4\!dimenA
  \!getf \advance \!midarclength by \!dimenA
  \divide \!midarclength by 12
%
  \!arclength=\!dimenA
  \!getf \advance \!arclength by 4\!dimenA
  \!getf \advance \!arclength by \!dimenA
  \divide \!arclength by 12
  \advance \!arclength by \!midarclength
  \global\advance \totalarclength by \!arclength
%
%
  \ifdim\!distacross>\!arclength 
    \advance \!distacross by -\!arclength
  \else
    \!initinverseinterp
    \loop\ifdim\!distacross<\!arclength
      \!inverseinterp
      \!xpos=\!t\!xC \advance\!xpos by \!xB
        \!xpos=\!t\!xpos \advance \!xpos by \!xS
      \!ypos=\!t\!yC \advance\!ypos by \!yB
        \!ypos=\!t\!ypos \advance \!ypos by \!yS
      \!plotifinbounds
      \advance\!distacross \plotsymbolspacing
      \!advancedashing
    \repeat  
    \advance \!distacross by -\!arclength
  \fi
  \!xS=\!xE
  \!yS=\!yE
  \ignorespaces}

\def\!getf{\!Pythag\!xprime\!yprime\!dimenA%
  \advance\!xprime by \!dxprime
  \advance\!yprime by \!dyprime}

\def\!initinverseinterp{%
  \ifdim\!arclength>\!zpt
    \!divide{8\!midarclength}\!arclength\!dimenE
    \ifdim\!dimenE<\!wmin \!setinverselinear
    \else 
      \ifdim\!dimenE>\!wmax \!setinverselinear
      \else
        \def\!inverseinterp{\!inversequad}\ignorespaces
%
%
         \!removept\!dimenE\!Ew
         \!dimenF=-\!Ew\!dimenE
         \advance\!dimenF by 32pt
         \!dimenG=8pt 
         \advance\!dimenG by -\!dimenE
         \!dimenG=\!Ew\!dimenG
         \!divide\!dimenF\!dimenG\!beta
         \!gamma=1pt
         \advance \!gamma by -\!beta
      \fi
    \fi
  \fi
  \ignorespaces}

\def\!inversequad{%
  \!divide\!distacross\!arclength\!dimenG
  \!removept\!dimenG\!v
  \!dimenG=\!v\!gamma
  \advance\!dimenG by \!beta
  \!dimenG=\!v\!dimenG
  \!removept\!dimenG\!t}

\def\!setinverselinear{%
  \def\!inverseinterp{\!inverselinear}%
  \divide\!dimenE by 8 \!removept\!dimenE\!t
  \!countC=\!intervalno \multiply \!countC 2
  \!countB=\!countC     \advance \!countB -1
  \!countA=\!countB     \advance \!countA -1
  \wlog{\the\!countB th point (\!xmidpt,\!ymidpt) being plotted 
    doesn't lie in the}%
  \wlog{ middle third of the arc between the \the\!countA th 
    and \the\!countC th points:}%
  \wlog{ [arc length \the\!countA\space to \the\!countB]/[arc length 
    \the \!countA\space to \the\!countC]=\!t.}%
  \ignorespaces}
 
\def\!inverselinear{%
  \!divide\!distacross\!arclength\!dimenG
  \!removept\!dimenG\!t}

 

\def\startrotation{%
  \let\!rotateaboutpivot=\!!rotateaboutpivot
  \let\!rotateonly=\!!rotateonly
  \!ifnextchar{b}{\!getsincos }%
    {\!getsincos by {\!cosrotationangle} {\!sinrotationangle} }}
\def\!getsincos by #1 #2 {%
  \edef\!cosrotationangle{#1}%
  \edef\!sinrotationangle{#2}%
  \!ifcoordmode 
    \let\!ROnext=\!ccheckforpivot
  \else
    \let\!ROnext=\!dcheckforpivot
  \fi
  \!ROnext}
\def\!ccheckforpivot{%
  \!ifnextchar{a}{\!cgetpivot}%
    {\!cgetpivot about {\!xpivotcoord} {\!ypivotcoord} }}
\def\!cgetpivot about #1 #2 {%
  \edef\!xpivotcoord{#1}%
  \edef\!ypivotcoord{#2}%
  \!xpivot=#1\!xunit  \!ypivot=#2\!yunit
  \ignorespaces}
\def\!dcheckforpivot{%
  \!ifnextchar{a}{\!dgetpivot}{\ignorespaces}}
\def\!dgetpivot about #1 #2 {%
  \!xpivot=#1\relax  \!ypivot=#2\relax
  \ignorespaces}

\def\stoprotation{%
  \let\!rotateaboutpivot=\!!!rotateaboutpivot
  \let\!rotateonly=\!!!rotateonly
  \ignorespaces}
 
\def\!!rotateaboutpivot#1#2{%
  \!dimenA=#1\relax  \advance\!dimenA -\!xpivot
  \!dimenB=#2\relax  \advance\!dimenB -\!ypivot
  \!dimenC=\!cosrotationangle\!dimenA
    \advance \!dimenC -\!sinrotationangle\!dimenB
  \!dimenD=\!cosrotationangle\!dimenB
    \advance \!dimenD  \!sinrotationangle\!dimenA
  \advance\!dimenC \!xpivot  \advance\!dimenD \!ypivot
  #1=\!dimenC  #2=\!dimenD
  \ignorespaces}

\def\!!rotateonly#1#2{%
  \!dimenA=#1\relax  \!dimenB=#2\relax 
  \!dimenC=\!cosrotationangle\!dimenA
    \advance \!dimenC -\!rotsign\!sinrotationangle\!dimenB
  \!dimenD=\!cosrotationangle\!dimenB
    \advance \!dimenD  \!rotsign\!sinrotationangle\!dimenA
  #1=\!dimenC  #2=\!dimenD
  \ignorespaces}
\def\!rotsign{}
\def\!!!rotateaboutpivot#1#2{\relax}
\def\!!!rotateonly#1#2{\relax}
\stoprotation

\def\!reverserotateonly#1#2{%
  \def\!rotsign{-}%
  \!rotateonly{#1}{#2}%
  \def\!rotsign{}%
  \ignorespaces}

\def\!getspan span <#1>{%
  \!dshade=#1\relax
  \!ifcoordmode 
    \let\!GRnext=\!GRccheckforAP
  \else
    \let\!GRnext=\!GRdcheckforAP
  \fi
  \!GRnext}
\def\!GRccheckforAP{%
  \!ifnextchar{p}{\!cgetanchor }
    {\!cgetanchor point at {\!xshadesave} {\!yshadesave} }}
\def\!cgetanchor point at #1 #2 {%
  \edef\!xshadesave{#1}\edef\!yshadesave{#2}%
  \!xshade=\!xshadesave\!xunit  \!yshade=\!yshadesave\!yunit
  \ignorespaces}
\def\!GRdcheckforAP{%
  \!ifnextchar{p}{\!dgetanchor}%
    {\ignorespaces}}
\def\!dgetanchor point at #1 #2 {%
  \!xshade=#1\relax  \!yshade=#2\relax
  \ignorespaces}

\def\setshadesymbol{%
  \!ifnextchar<{\!setshadesymbol}{\!setshadesymbol<,,,> }}

\def\!setshadesymbol <#1,#2,#3,#4> (#5#6){%
  \!setputobject{#5}{#6}%
  \setbox\!shadesymbol=\box\!putobject%
  \!shadesymbolxshift=\!xshift \!shadesymbolyshift=\!yshift
%
  \!dimenA=\!xshift \advance\!dimenA \!smidge
  \!override\!dimenA{#1}\!lshrinkage%
  \!dimenA=\!wd \advance \!dimenA -\!xshift
    \advance\!dimenA \!smidge
    \!override\!dimenA{#2}\!rshrinkage
  \!dimenA=\!dp \advance \!dimenA \!yshift
    \advance\!dimenA \!smidge
    \!override\!dimenA{#3}\!bshrinkage
  \!dimenA=\!ht \advance \!dimenA -\!yshift
    \advance\!dimenA \!smidge
    \!override\!dimenA{#4}\!tshrinkage
  \ignorespaces}
\def\!smidge{-.2pt}%

\def\!override#1#2#3{%
  \edef\!!override{#2}%
  \ifx \!!override\empty
    #3=#1\relax
  \else
    \if z\!!override
      #3=\!zpt
    \else
      \ifx \!!override\!blankz
        #3=\!zpt
      \else
        #3=#2\relax
      \fi
    \fi
  \fi
  \ignorespaces}
\def\!blankz{ z}

\setshadesymbol ({\fiverm .})

\def\!startvshade#1(#2,#3,#4){%
  \let\!!xunit=\!xunit%
  \let\!!yunit=\!yunit%
  \let\!!xshade=\!xshade%
  \let\!!yshade=\!yshade%
  \def\!getshrinkages{\!vgetshrinkages}%
  \let\!setshadelocation=\!vsetshadelocation%
  \!xS=\!M{#2}\!!xunit
  \!ybS=\!M{#3}\!!yunit
  \!ytS=\!M{#4}\!!yunit
  \!shadexorigin=\!xorigin  \advance \!shadexorigin \!shadesymbolxshift
  \!shadeyorigin=\!yorigin  \advance \!shadeyorigin \!shadesymbolyshift
  \ignorespaces}
 
\def\!starthshade#1(#2,#3,#4){%
  \let\!!xunit=\!yunit%
  \let\!!yunit=\!xunit%
  \let\!!xshade=\!yshade%
  \let\!!yshade=\!xshade%
  \def\!getshrinkages{\!hgetshrinkages}%
  \let\!setshadelocation=\!hsetshadelocation%
  \!xS=\!M{#2}\!!xunit
  \!ybS=\!M{#3}\!!yunit
  \!ytS=\!M{#4}\!!yunit
  \!shadexorigin=\!xorigin  \advance \!shadexorigin \!shadesymbolxshift
  \!shadeyorigin=\!yorigin  \advance \!shadeyorigin \!shadesymbolyshift
  \ignorespaces}

\def\!lattice#1#2#3#4#5{%
  \!dimenA=#1
  \!dimenB=#2
  \!countB=\!dimenB
%
  \!dimenC=#3
  \advance\!dimenC -\!dimenA
  \!countA=\!dimenC
  \divide\!countA \!countB
  \ifdim\!dimenC>\!zpt
    \!dimenD=\!countA\!dimenB
    \ifdim\!dimenD<\!dimenC
      \advance\!countA 1 
    \fi
  \fi
  \!dimenC=\!countA\!dimenB
    \advance\!dimenC \!dimenA
  #4=\!countA
  #5=\!dimenC
  \ignorespaces}

\def\!qshade#1(#2,#3,#4)#5(#6,#7,#8){%
  \!xM=\!M{#2}\!!xunit
  \!ybM=\!M{#3}\!!yunit
  \!ytM=\!M{#4}\!!yunit
  \!xE=\!M{#6}\!!xunit
  \!ybE=\!M{#7}\!!yunit
  \!ytE=\!M{#8}\!!yunit
  \!getcoeffs\!xS\!ybS\!xM\!ybM\!xE\!ybE\!ybB\!ybC
  \!getcoeffs\!xS\!ytS\!xM\!ytM\!xE\!ytE\!ytB\!ytC
  \def\!getylimits{\!qgetylimits}%
  \!shade{#1}\ignorespaces}
 
\def\!lshade#1(#2,#3,#4){%
  \!xE=\!M{#2}\!!xunit
  \!ybE=\!M{#3}\!!yunit
  \!ytE=\!M{#4}\!!yunit
  \!dimenE=\!xE  \advance \!dimenE -\!xS
  \!dimenC=\!ytE \advance \!dimenC -\!ytS
  \!divide\!dimenC\!dimenE\!ytB
  \!dimenC=\!ybE \advance \!dimenC -\!ybS
  \!divide\!dimenC\!dimenE\!ybB
  \def\!getylimits{\!lgetylimits}%
  \!shade{#1}\ignorespaces}
 
\def\!getcoeffs#1#2#3#4#5#6#7#8{%
  \!dimenC=#4\advance \!dimenC -#2
  \!dimenE=#3\advance \!dimenE -#1
  \!divide\!dimenC\!dimenE\!dimenF
  \!dimenC=#6\advance \!dimenC -#4
  \!dimenH=#5\advance \!dimenH -#3
  \!divide\!dimenC\!dimenH\!dimenG
  \advance\!dimenG -\!dimenF
  \advance \!dimenH \!dimenE
  \!divide\!dimenG\!dimenH#8
  \!removept#8\!t
  #7=-\!t\!dimenE
  \advance #7\!dimenF
  \ignorespaces}

\def\!shade#1{%
  \!getshrinkages#1<,,,>\!nil
  \advance \!dimenE \!xS
  \!lattice\!!xshade\!dshade\!dimenE
    \!parity\!xpos
  \!dimenF=-\!dimenF
    \advance\!dimenF \!xE
  \!loop\!not{\ifdim\!xpos>\!dimenF}
    \!shadecolumn%
    \advance\!xpos \!dshade
    \advance\!parity 1
  \repeat
  \!xS=\!xE
  \!ybS=\!ybE
  \!ytS=\!ytE
  \ignorespaces}

\def\!vgetshrinkages#1<#2,#3,#4,#5>#6\!nil{%
  \!override\!lshrinkage{#2}\!dimenE
  \!override\!rshrinkage{#3}\!dimenF
  \!override\!bshrinkage{#4}\!dimenG
  \!override\!tshrinkage{#5}\!dimenH
  \ignorespaces}
\def\!hgetshrinkages#1<#2,#3,#4,#5>#6\!nil{%
  \!override\!lshrinkage{#2}\!dimenG
  \!override\!rshrinkage{#3}\!dimenH
  \!override\!bshrinkage{#4}\!dimenE
  \!override\!tshrinkage{#5}\!dimenF
  \ignorespaces}

\def\!shadecolumn{%
  \!dxpos=\!xpos
  \advance\!dxpos -\!xS
  \!removept\!dxpos\!dx
  \!getylimits
  \advance\!ytpos -\!dimenH
  \advance\!ybpos \!dimenG
  \!yloc=\!!yshade
  \ifodd\!parity 
     \advance\!yloc \!dshade
  \fi
  \!lattice\!yloc{2\!dshade}\!ybpos%
    \!countA\!ypos
  \!dimenA=-\!shadexorigin \advance \!dimenA \!xpos
  \loop\!not{\ifdim\!ypos>\!ytpos}
    \!setshadelocation
    \!rotateaboutpivot\!xloc\!yloc%
    \!dimenA=-\!shadexorigin \advance \!dimenA \!xloc
    \!dimenB=-\!shadeyorigin \advance \!dimenB \!yloc
    \kern\!dimenA \raise\!dimenB\copy\!shadesymbol \kern-\!dimenA
    \advance\!ypos 2\!dshade
  \repeat
  \ignorespaces}
 
\def\!qgetylimits{%
  \!dimenA=\!dx\!ytC              
  \advance\!dimenA \!ytB
  \!ytpos=\!dx\!dimenA
  \advance\!ytpos \!ytS
  \!dimenA=\!dx\!ybC              
  \advance\!dimenA \!ybB
  \!ybpos=\!dx\!dimenA
  \advance\!ybpos \!ybS}
 
\def\!lgetylimits{%
  \!ytpos=\!dx\!ytB
  \advance\!ytpos \!ytS
  \!ybpos=\!dx\!ybB
  \advance\!ybpos \!ybS}
 
\def\!vsetshadelocation{
  \!xloc=\!xpos
  \!yloc=\!ypos}
\def\!hsetshadelocation{
  \!xloc=\!ypos
  \!yloc=\!xpos}





\def\!axisticks {%
  \def\!nextkeyword##1 {%
    \expandafter\ifx\csname !ticks##1\endcsname \relax
      \def\!next{\!fixkeyword{##1}}%
    \else
      \def\!next{\csname !ticks##1\endcsname}%
    \fi
    \!next}%
  \!axissetup
    \def\!axissetup{\relax}%
  \edef\!ticksinoutsign{\!ticksinoutSign}%
  \!ticklength=\longticklength
  \!tickwidth=\linethickness
  \!gridlinestatus
  \!setticktransform
  \!maketick
  \!tickcase=0
  \def\!LTlist{}%
  \!nextkeyword}

\def\ticksout{%
  \def\!ticksinoutSign{+}}

\ticksout

\def\nogridlines{%
  \def\!gridlinestatus{\!gridlinestoofalse}}
\nogridlines

\def\loggedticks{%
  \def\!setticktransform{\let\!ticktransform=\!logten}}
\def\unloggedticks{%
  \def\!setticktransform{\let\!ticktransform=\!donothing}}
\def\!donothing#1#2{\def#2{#1}}
\unloggedticks

\expandafter\def\csname !ticks/\endcsname{%
  \!not {\ifx \!LTlist\empty}
    \!placetickvalues
  \fi
  \def\!tickvalueslist{}%
  \def\!LTlist{}%
  \expandafter\csname !axis/\endcsname}

\def\!maketick{%
  \setbox\!boxA=\hbox{%
    \beginpicture
      \!setdimenmode
      \setcoordinatesystem point at {\!zpt} {\!zpt}   
      \linethickness=\!tickwidth
      \ifdim\!ticklength>\!zpt
        \putrule from {\!zpt} {\!zpt} to
          {\!ticksinoutsign\!tickxsign\!ticklength}
          {\!ticksinoutsign\!tickysign\!ticklength}
      \fi
      \if!gridlinestoo
        \putrule from {\!zpt} {\!zpt} to
          {-\!tickxsign\!xaxislength} {-\!tickysign\!yaxislength}
      \fi
    \endpicturesave <\!Xsave,\!Ysave>}%
    \wd\!boxA=\!zpt}
  
\def\!ticksin{%
  \def\!ticksinoutsign{-}%
  \!maketick
  \!nextkeyword}

\def\!ticksout{%
  \def\!ticksinoutsign{+}%
  \!maketick
  \!nextkeyword}

\def\!tickslength<#1> {%
  \!ticklength=#1\relax
  \!maketick
  \!nextkeyword}

\def\!tickslong{%
  \!tickslength<\longticklength> }

\def\!ticksshort{%
  \!tickslength<\shortticklength> }

\def\!tickswidth<#1> {%
  \!tickwidth=#1\relax
  \!maketick
  \!nextkeyword}

\def\!ticksandacross{%
  \!gridlinestootrue
  \!maketick
  \!nextkeyword}

\def\!ticksbutnotacross{%
  \!gridlinestoofalse
  \!maketick
  \!nextkeyword}

\def\!tickslogged{%
  \let\!ticktransform=\!logten
  \!nextkeyword}

\def\!ticksunlogged{%
  \let\!ticktransform=\!donothing
  \!nextkeyword}

\def\!ticksunlabeled{%
  \!tickcase=0
  \!nextkeyword}

\def\!ticksnumbered{%
  \!tickcase=1
  \!nextkeyword}

\def\!tickswithvalues#1/ {%
  \edef\!tickvalueslist{#1! /}%
  \!tickcase=2
  \!nextkeyword}

\def\!ticksquantity#1 {%
  \ifnum #1>1
    \!updatetickoffset
    \!countA=#1\relax
    \advance \!countA -1
    \!ticklocationincr=\!axisLength
      \divide \!ticklocationincr \!countA
    \!ticklocation=\!axisstart
    \loop \!not{\ifdim \!ticklocation>\!axisend}
      \!placetick\!ticklocation
      \ifcase\!tickcase
          \relax 
        \or
          \relax 
        \or
          \expandafter\!gettickvaluefrom\!tickvalueslist
          \edef\!tickfield{{\the\!ticklocation}{\!value}}%
          \expandafter\!listaddon\expandafter{\!tickfield}\!LTlist%
      \fi
      \advance \!ticklocation \!ticklocationincr
    \repeat
  \fi
  \!nextkeyword}

\def\!ticksat#1 {%
  \!updatetickoffset
  \edef\!Loc{#1}%
  \if /\!Loc
    \def\next{\!nextkeyword}%
  \else
    \!ticksincommon
    \def\next{\!ticksat}%
  \fi
  \next}    
      
\def\!ticksfrom#1 to #2 by #3 {%
  \!updatetickoffset
  \edef\!arg{#3}%
  \expandafter\!separate\!arg\!nil
  \!scalefactor=1
  \expandafter\!countfigures\!arg/
  \edef\!arg{#1}%
  \!scaleup\!arg by\!scalefactor to\!countE
  \edef\!arg{#2}%
  \!scaleup\!arg by\!scalefactor to\!countF
  \edef\!arg{#3}%
  \!scaleup\!arg by\!scalefactor to\!countG
  \loop \!not{\ifnum\!countE>\!countF}
    \ifnum\!scalefactor=1
      \edef\!Loc{\the\!countE}%
    \else
      \!scaledown\!countE by\!scalefactor to\!Loc
    \fi
    \!ticksincommon
    \advance \!countE \!countG
  \repeat
  \!nextkeyword}

\def\!updatetickoffset{%
  \!dimenA=\!ticksinoutsign\!ticklength
  \ifdim \!dimenA>\!offset
    \!offset=\!dimenA
  \fi}

\def\!placetick#1{%
  \if!xswitch
    \!xpos=#1\relax
    \!ypos=\!axisylevel
  \else
    \!xpos=\!axisxlevel
    \!ypos=#1\relax
  \fi
  \advance\!xpos \!Xsave
  \advance\!ypos \!Ysave
  \kern\!xpos\raise\!ypos\copy\!boxA\kern-\!xpos
  \ignorespaces}

\def\!gettickvaluefrom#1 #2 /{%
  \edef\!value{#1}%
  \edef\!tickvalueslist{#2 /}%
  \ifx \!tickvalueslist\!endtickvaluelist
    \!tickcase=0
  \fi}
\def\!endtickvaluelist{! /}

\def\!ticksincommon{%
  \!ticktransform\!Loc\!t
  \!ticklocation=\!t\!!unit
  \advance\!ticklocation -\!!origin
  \!placetick\!ticklocation
  \ifcase\!tickcase
    \relax 
  \or 
    \ifdim\!ticklocation<-\!!origin
      \edef\!Loc{$\!Loc$}%
    \fi
    \edef\!tickfield{{\the\!ticklocation}{\!Loc}}%
    \expandafter\!listaddon\expandafter{\!tickfield}\!LTlist%
  \or 
    \expandafter\!gettickvaluefrom\!tickvalueslist
    \edef\!tickfield{{\the\!ticklocation}{\!value}}%
    \expandafter\!listaddon\expandafter{\!tickfield}\!LTlist%
  \fi}

\def\!separate#1\!nil{%
  \!ifnextchar{-}{\!!separate}{\!!!separate}#1\!nil}
\def\!!separate-#1\!nil{%
  \def\!sign{-}%
  \!!!!separate#1..\!nil}
\def\!!!separate#1\!nil{%
  \def\!sign{+}%
  \!!!!separate#1..\!nil}
\def\!!!!separate#1.#2.#3\!nil{%
  \def\!arg{#1}%
  \ifx\!arg\!empty
    \!countA=0
  \else
    \!countA=\!arg
  \fi
  \def\!arg{#2}%
  \ifx\!arg\!empty
    \!countB=0
  \else
    \!countB=\!arg
  \fi}
 
\def\!countfigures#1{%
  \if #1/%
    \def\!next{\ignorespaces}%
  \else
    \multiply\!scalefactor 10
    \def\!next{\!countfigures}%
  \fi
  \!next}

\def\!scaleup#1by#2to#3{%
  \expandafter\!separate#1\!nil
  \multiply\!countA #2\relax
  \advance\!countA \!countB
  \if -\!sign
    \!countA=-\!countA
  \fi
  #3=\!countA
  \ignorespaces}

\def\!scaledown#1by#2to#3{%
  \!countA=#1\relax
  \ifnum \!countA<0 
    \def\!sign{-}
    \!countA=-\!countA
  \else
    \def\!sign{}%
  \fi
  \!countB=\!countA
  \divide\!countB #2\relax
  \!countC=\!countB
    \multiply\!countC #2\relax
  \advance \!countA -\!countC
  \edef#3{\!sign\the\!countB.}
  \!countC=\!countA 
  \ifnum\!countC=0 
    \!countC=1
  \fi
  \multiply\!countC 10
  \!loop \ifnum #2>\!countC
    \edef#3{#3\!zero}%
    \multiply\!countC 10
  \repeat
  \edef#3{#3\the\!countA}
  \ignorespaces}

\def\!placetickvalues{%
  \advance\!offset \tickstovaluesleading
  \if!xswitch
    \setbox\!boxA=\hbox{%
      \def\\##1##2{%
        \!dimenput {##2} [B] (##1,\!axisylevel)}%
      \beginpicture 
        \!LTlist
      \endpicturesave <\!Xsave,\!Ysave>}%
    \!dimenA=\!axisylevel
      \advance\!dimenA -\!Ysave
      \advance\!dimenA \!tickysign\!offset
      \if -\!tickysign
        \advance\!dimenA -\ht\!boxA
      \else
        \advance\!dimenA  \dp\!boxA
      \fi
    \advance\!offset \ht\!boxA 
      \advance\!offset \dp\!boxA
    \!dimenput {\box\!boxA} [Bl] <\!Xsave,\!Ysave> (\!zpt,\!dimenA)
  \else
    \setbox\!boxA=\hbox{%
      \def\\##1##2{%
        \!dimenput {##2} [r] (\!axisxlevel,##1)}%
      \beginpicture 
        \!LTlist
      \endpicturesave <\!Xsave,\!Ysave>}%
    \!dimenA=\!axisxlevel
      \advance\!dimenA -\!Xsave
      \advance\!dimenA \!tickxsign\!offset
      \if -\!tickxsign
        \advance\!dimenA -\wd\!boxA
      \fi
    \advance\!offset \wd\!boxA
    \!dimenput {\box\!boxA} [Bl] <\!Xsave,\!Ysave> (\!dimenA,\!zpt)
  \fi}

\normalgraphs
\catcode`!=12 


 
\catcode`@=11 \catcode`!=11
  
\let\!pictexendpicture=\endpicture 
\let\!pictexframe=\frame
\let\!pictexlinethickness=\linethickness
\let\!pictexmultiput=\multiput
\let\!pictexput=\put

\def\beginpicture{%
  \setbox\!picbox=\hbox\bgroup%
  \let\endpicture=\!pictexendpicture
  \let\frame=\!pictexframe
  \let\linethickness=\!pictexlinethickness
  \let\multiput=\!pictexmultiput
  \let\put=\!pictexput
  \let\input=\@@input   
  \!xleft=\maxdimen  
  \!xright=-\maxdimen
  \!ybot=\maxdimen
  \!ytop=-\maxdimen}

\let\frame=\!latexframe

\let\pictexframe=\!pictexframe

\let\linethickness=\!latexlinethickness
\let\pictexlinethickness=\!pictexlinethickness

\let\\=\@normalcr
\catcode`@=12 \catcode`!=12

\newtheorem{thm}{Theorem}[section]
\newtheorem{lem}[thm]{Lemma}

\newtheorem{conj}[thm]{Conjecture}

\newtheorem{rmk}[thm]{Remark}

\newtheorem{thm-con}[thm]{Theorem-Conjecture}
\numberwithin{equation}{section}

\theoremstyle{definition}

\allowdisplaybreaks

\newcommand{\f}{\Bbb F}


\begin{document}

\title[a Type of Permutation Rational Functions over Finite Fields]{On a Type of Permutation Rational Functions over Finite Fields}

\author[Xiang-dong Hou]{Xiang-dong Hou}
\address{Department of Mathematics and Statistics,
University of South Florida, Tampa, FL 33620}
\email{xhou@usf.edu}

\author[Christopher Sze]{Christopher Sze}
\address{Department of Mathematics and Statistics,
University of South Florida, Tampa, FL 33620}
\email{csze@mail.usf.edu}

\keywords{absolute irreducibility, finite field, Hasse-Weil bound, permutation, rational function}

\subjclass[2010]{11R58, 11T06, 12E12, 14H05}

\begin{abstract}
Let $p$ be a prime and $n$ be a positive integer. Let $f_b(X)=X+(X^p-X+b)^{-1}$, where $b\in\f_{p^n}$ is such that $\text{Tr}_{p^n/p}(b)\ne 0$. In 2008, Yuan et al. \cite{Yuan-Ding-Wang-Pieprzyk-FFA-2008} showed that for $p=2,3$, $f_b$ permutes $\f_{p^n}$ for all $n\ge 1$. Using the Hasse-Weil bound, we show that when $p>3$ and $n\ge 5$, $f$ does not permute $\f_{p^n}$. For $p>3$ and $n=2$, we prove that $f_b$ permutes $\f_{p^2}$ if and only if $\text{Tr}_{p^2/p}(b)=\pm 1$. We conjecture that for $p>3$ and $n=3,4$, $f_b$ does not permute $\f_{p^n}$.
\end{abstract}

\maketitle

\section{Introduction}

Let $p$ be a prime, $n$ be a positive integer, and $\f_{p^n}$ be the finite field with $p^n$ elements. Let $b\in\f_{p^n}$ be such that $\text{Tr}_{p^n/p}(b)\ne 0$. There is considerable interest in permutations of $\f_{p^n}$ of the form $X+(X^p-X+b)^s$, where $s\in\Bbb Z$; see \cite{Helleseth-Zinoviev-FFA-2003, Hou-FFA-2015a, Li-Wang-Li-Zeng-FFA-2018, Tu-Zeng-Jiang-FFA-2015, Yuan-Ding-FFA-2007, Yuan-Ding-Wang-Pieprzyk-FFA-2008, Zeng-Zhu-Hu-FFA-2010, Zha-Hu-FFA-2016, Zheng-Yuan-Yu-FFA-2019} and the references therein. The case $s=-1$ turns out to be quite interesting. Let 
\begin{equation}\label{1.1}
f_b(X)=X+\frac 1{X^p-X+b},
\end{equation}
In 2008, Yuan et al. \cite{Yuan-Ding-Wang-Pieprzyk-FFA-2008} showed that for $p=2,3$, $f_b$ permutes $\f_{p^n}$ for all $n\ge 1$. The results were rediscovered in \cite{Ferraguti-Micheli-arXiv1805.03097} and \cite{hou-ppt} in a different context. Naturally, one would like to know if the statement is true for $p>3$. We give a negative answer to the question by proving the following theorem.

\begin{thm}\label{T1.1}
Let $p>3$ be a prime and $n\ge 5$ be an integer. Let $b\in\f_{p^n}$ be such that $\text{\rm Tr}_{p^n/p}(b)\ne 0$. Then the rational function $f_b$ in \eqref{1.1} does not permute $\f_{p^n}$.
\end{thm}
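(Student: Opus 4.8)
The plan is to translate non-permutation into the existence of a rational point on a curve and then to produce such a point via the Hasse--Weil bound. Since $\text{Tr}_{p^n/p}(b)\ne 0$, the polynomial $X^p-X+b$ has no root in $\f_{p^n}$, so $f_b$ is defined on all of $\f_{p^n}$; thus $f_b$ fails to permute $\f_{p^n}$ exactly when there are distinct $x,y\in\f_{p^n}$ with $f_b(x)=f_b(y)$. Writing $u=x^p-x+b$, $v=y^p-y+b$ and $s=x-y$, the relation $u-v=s^p-s$ reduces $f_b(x)=f_b(y)$ (for $s\ne0$) to $uv=s^{p-1}-1$, i.e. to
\begin{equation*}
F(x,s):=(x^p-x+b)^2-(x^p-x+b)(s^p-s)-(s^{p-1}-1)=0 .
\end{equation*}
So it suffices to exhibit an $\f_{p^n}$-point $(x,s)$ of the affine curve $C\colon F=0$ with $s\ne 0$; for such a point $u,v\ne 0$ automatically, again by the trace hypothesis, and the derivation reverses, so $(x,y)=(x,x-s)$ is a genuine collision.

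The heart of the argument is to show that $C$ is absolutely irreducible. I would factor the covering $\overline{\f}_p(s)\subset\overline{\f}_p(s,u)\subset\overline{\f}_p(s,x)$ through the hyperelliptic curve $\mathcal C\colon w^2=\Delta(s)$, where $\Delta=(s^p-s)^2+4(s^{p-1}-1)$ is the discriminant of $F$ as a quadratic in $u$, followed by the Artin--Schreier step $x^p-x=u-b$. First I would prove the sub-lemma that $\Delta$ is squarefree for $p>3$: a common root $s_0$ of $\Delta$ and $\Delta'$ forces, after elimination, $s_0^2=-4$ and $(-4)^{(p-1)/2}=2$, and the latter reduces via Fermat to $2=\pm1$ in $\f_p$, which holds only for $p=3$. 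Hence for $p>3$ the curve $\mathcal C$ is irreducible of genus $p-1$. Setting $L=\overline{\f}_p(\mathcal C)$ and $\wp(z)=z^p-z$, the top step is irreducible iff $u-b\notin\wp(L)$; since $2(u-b)=w+\wp(s)-2b$, this is equivalent to $w\notin\wp(L)$. Splitting a putative $z=a+hw$ into $\pm$-eigenparts for the hyperelliptic involution $\sigma\colon w\mapsto -w$ turns $w=\wp(z)$ into the one-variable condition $h^p\Delta^{(p-1)/2}-h=1$ with $h\in\overline{\f}_p(s)$. A pole-order count at the roots of $\Delta$ (here squarefreeness is essential) shows $h$ can have no finite pole, hence is a polynomial, and then a degree comparison of the two sides rules out every polynomial $h$. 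Therefore $u-b\notin\wp(L)$ and $C$ is absolutely irreducible over $\f_{p^n}$.

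Next I would pin down the genus precisely, which is what makes the final bound reach down to $n=5$. The function $u-b$ has its only pole at one of the two places $\infty_\pm$ of $\mathcal C$ at infinity, of order $p$, and there it is locally congruent modulo $\wp$ to a regular function (subtract $\wp(s)$), while it is regular at the other infinite place and at all finite places. Hence the degree-$p$ Artin--Schreier cover $C\to\mathcal C$ is unramified, and Riemann--Hurwitz gives $2g_C-2=p\bigl(2(p-1)-2\bigr)$, i.e. $g_C=(p-1)^2$.

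Finally I would apply Hasse--Weil to the smooth model $\widetilde C$: $\#\widetilde C(\f_{p^n})\ge p^n+1-2(p-1)^2 p^{n/2}$. The points to be discarded --- the points above $\infty_\pm$, the points with $s=0$ (where $F(x,0)=u^2+1$, so at most $2p$ of them), and any singular-point corrections --- number $O(p)$, which is dwarfed by the main term once $n\ge5$ and $p>3$ (the tightest case being $p=5$, $n=5$, where $p^n=3125$ already exceeds $2(p-1)^2p^{n/2}\approx1789$). Thus $C$ has an $\f_{p^n}$-point with $s\ne0$, and $f_b$ does not permute $\f_{p^n}$. I expect the absolute irreducibility --- specifically ruling out solutions of $h^p\Delta^{(p-1)/2}-h=1$ and proving that the cover is unramified, so that $g_C$ is as small as $(p-1)^2$ --- to be the main obstacle; the same genus estimate explains why $n=3,4$ fall outside the reach of this method and are left as a conjecture.
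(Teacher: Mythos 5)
Your proposal is correct, and its skeleton coincides with the paper's: the same reduction of non-permutation to a point with $s\ne 0$ on the curve $(X^p-X+b)^2-(X^p-X+b)(S^p-S)+1-S^{p-1}=0$ (your $s$ is the paper's $-Y$), and essentially the same absolute-irreducibility argument --- a quadratic step governed by the squarefree discriminant $\Delta=(s^p-s)^2+4(s^{p-1}-1)$, then an Artin--Schreier step reduced, via the $\pm$-eigenspace decomposition, to the unsolvability of $h^p\Delta^{(p-1)/2}-h=1$ in $\overline{\f}_p(s)$ by a pole-order and degree count (the paper's equation (2.4) is exactly this up to a factor of $2$). Where you genuinely diverge is the final point count, and your version is sharper. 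The paper applies the Aubry--Perret bound to the plane curve of degree $2p$, with error term $(2p-1)(2p-2)p^{n/2}\approx 4p^2p^{n/2}$; this fails for $p^n=5^5,7^5,11^5,13^5$, which the paper settles by computer. You instead compute the exact geometric genus: $w^2=\Delta(s)$ has genus $p-1$, and since $u=\tfrac12(s^p-s+w)$ is regular at all finite places, congruent modulo $\wp(s)$ to a regular function at $\infty_+$, and regular at $\infty_-$, the degree-$p$ Artin--Schreier cover is everywhere unramified, so Riemann--Hurwitz gives $g=(p-1)^2$ and a Weil error term $2(p-1)^2p^{n/2}$, roughly half the paper's. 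This closes all cases $p\ge5$, $n\ge5$ uniformly (at the tightest point $p=5$, $n=5$ one gets $3126-1789-4p>0$, the $4p$ bounding the fibres of the degree-$2p$ map to the $s$-line over $s=0$ and $s=\infty$), so your route removes the computer verification entirely. Two small cleanups: your ``singular-point corrections'' are not needed at all, since a singular affine point with $s\ne0$ is still a rational zero of $F$ and hence still a collision (and if you did route through the plane model, the correction would be the $\delta$-invariant sum, which is $O(p^2)$ rather than $O(p)$ --- harmless here but worth stating correctly); and the unramifiedness computation at $\infty_\pm$, on which the whole genus gain rests, deserves to be written out explicitly via the expansion $w=\pm\bigl(s^p-s+2s^{-1}+O(s^{-2})\bigr)$.
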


In Section~2, we prove Theorem~\ref{T1.1} using the Aubry-Perret version \cite{Aubry-Perret-1993} of the Hasse-Weil bound \cite{Weil-1948}. To apply the Hasse-Weil bound, we need to show that a certain polynomial in $\f_{p^n}[X,Y]$ is absolutely irreducible. The same approach has been used for different questions \cite{Blokhuis-Cao-Chou-Hou-JNT-2018, Hou-FFA-2018}. In Section~3, we show that for $p>3$ and $n=2$, $f_b$ permutes $\f_{p^2}$ if and only if $\text{Tr}_{p^2/p}(b)=\pm1$. We use a novel method for solving polynomial equations in $x,x^p,y,y^p$ in $\f_{p^2}$. We conclude the paper with a conjecture about the unsettled cases ($n=3,4$) and a few brief remarks.

\section{Proof of Theorem~\ref{T1.1}}

For $p^n=5^5,7^5,11^5,13^5$, Theorem~\ref{T1.1} has been verified using a computer. Therefore, we assume that either $n\ge 6$ or $n=5$ and $p\ge 17$.

It suffices to show that there exist $x\in\f_{p^n}$ and $y\in\f_{p^n}^*$ such that $f_b(x+y)-f_b(x)=0$. We have
\[
f_b(x+y)-f_b(x)=\frac{y(z^2+(y^p-y)z+1-y^{p-1})}{(x^p-x+b)((x+y)^p-(x+y)+b)},
\]
where $z=x^p-x+b$. Let
\begin{equation}\label{2.1a}
F(X,Y)=(X^p-X+b)^2+(Y^p-Y)(X^p-X+b)+1-Y^{p-1}\in\f_{p^n}[X,Y].
\end{equation}
It suffices to show that $F$ has a zero $(x,y)\in\f_{p^n}^2$ with $y\ne 0$. Let
\begin{equation}\label{2.1.0}
V_{\f_{p^n}^2}(F)=\{(x,y)\in\f_{p^n}^2:F(x,y)=0\}.
\end{equation}
By Lemma~\ref{L} below, $F(X,Y)$ is absolutely irreducible, i.e., irreducible over the algebraic closure $\overline\f_p$ of $\f_p$. Then by the Aubry-Perret version of the Hasse-Weil bound \cite{Aubry-Perret-1993}, we have
\begin{equation}\label{2.2a}
|V_{\f_{p^n}^2}(F)|\ge p^n+1-(2p-1)(2p-2)p^{n/2}-2,
\end{equation}
where $2$ is the number of zeros of $F$ at infinity. Thus
\[
|V_{\f_{p^n}^2}(F)|\ge p^n-4p^{2+n/2}-1=p^{2+n/2}(p^{n/2-2}-4)-1.
\]
When $n\ge 6$, $p^{n/2-2}-4\ge p-4\ge 1$; when $n=5$ and $p\ge 17$, $p^{n/2-2}-4\ge\sqrt{17}-4>0.1$. Hence
\[
|V_{\f_{p^n}^2}(F)|> 0.1 p^{2+n/2}-1.
\]
On the other hand, the number of zeros of $F(X,0)=(X^p-X+b)^2+1$ in $\f_{p^n}$ is
\[
\le 2p<0.1p^{2+n/2}-1<|V_{\f_{p^n}^2}(F)|.
\]
Therefore $F(X,Y)$ has a zero $(x,y)\in\f_{p^n}^2$ with $y\ne 0$.

\begin{lem}\label{L} 
The polynomial $F(X,Y)$ in \eqref{2.1a} is absolutely irreducible. 
\end{lem}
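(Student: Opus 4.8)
The plan is to regard $F$ as a quadratic polynomial in $W:=X^p-X+b$ over the rational function field $K:=\overline{\f}_p(Y)$ and to analyze its splitting via Artin--Schreier theory. Writing
\[
F=W^2+(Y^p-Y)W+(1-Y^{p-1}),
\]
its discriminant with respect to $W$ is $\Delta:=(Y^p-Y)^2-4(1-Y^{p-1})\in\overline{\f}_p[Y]$, of degree $2p$. Over the extension $L:=K(\sqrt\Delta)$ we have $F=f_1f_2$ with $f_i=X^p-X-(r_i-b)$, where $r_1,r_2=\tfrac12\bigl(-(Y^p-Y)\pm\sqrt\Delta\bigr)$ are the two roots of the quadratic. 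Since $F$ is monic in $X$, it is primitive over $\overline{\f}_p[Y]$, so by Gauss's lemma absolute irreducibility of $F$ is equivalent to irreducibility of $F$ in $K[X]$.

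First I would show $\Delta$ is not a perfect square in $\overline{\f}_p[Y]$, so that $[L:K]=2$ and $r_1,r_2\notin K$ are conjugate under the nontrivial $\sigma\in\mathrm{Gal}(L/K)$. A short computation gives $\Delta'=-2Y^p-4Y^{p-2}+2Y=-2Y(Y^{p-1}+2Y^{p-3}-1)$ in characteristic $p$; since $\Delta(0)=-4\ne 0$, checking that a common root $\alpha$ of $\Delta$ and $g(Y):=Y^{p-1}+2Y^{p-3}-1$ must satisfy $\alpha^2=-4$ will show that every repeated root of $\Delta$ lies in $\{\pm 2\sqrt{-1}\}$. As $g$ is even and $g'(\pm 2\sqrt{-1})\ne 0$, these two points are at most double roots of $\Delta$ and occur together; hence $\Delta$ has simple roots (so is not a square), its squarefree part $\Delta_0$ has even degree $\ge 2p-2\ge 8$, and the hyperelliptic curve $C:\ s^2=\Delta_0$ has two places above $Y=\infty$ and genus $\ge p-2\ge 3$.

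Next I would reduce irreducibility of $F$ over $K$ to a single Artin--Schreier statement. Because $\sigma$ interchanges $f_1$ and $f_2$ and fixes $F$, if $f_1$ is irreducible over $L$ then any monic irreducible factor of $F$ over $K$ must, over $L$, be a $\sigma$-stable subproduct of $f_1f_2$; as $f_1,f_2\notin K[X]$ (their constant terms involve $r_i\notin K$), this forces the factor to equal $F$, so $F$ is irreducible over $K$. By the Artin--Schreier criterion, $f_1=X^p-X-(r_1-b)$ is irreducible over $L$ iff $r_1-b\notin\wp(L)$, where $\wp(t)=t^p-t$; and since $b$ is a constant and $\overline{\f}_p$ is perfect, $b\in\wp(L)$, so it suffices to prove $r_1\notin\wp(L)$.

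The main obstacle is this last step, which I would settle by a pole analysis on $C$. Expanding $\sqrt\Delta=\pm(Y^p-Y+2Y^{-1}+\cdots)$ at the two places above $Y=\infty$ shows that $r_1$ is regular at every finite place and at one infinite place (where it in fact vanishes), and has a pole of order exactly $p$ at the other infinite place $P$. If $r_1=\wp(t)$ for some $t\in L$, then the divisibility of pole orders under $\wp$ forces $t$ to be regular away from $P$ and to have a simple pole at $P$; such a nonconstant function exists only on a curve of genus $0$, contradicting $\mathrm{genus}(C)\ge 3$. Hence $r_1\notin\wp(L)$, $f_1$ is irreducible over $L$, and $F$ is absolutely irreducible. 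The delicate points are the cancellation in the local expansion of $r_1$ at infinity, which is what makes its pole order equal to $p$ rather than yielding an immediate prime-to-$p$ contradiction, and the clean translation of ``$r_1\in\wp(L)$'' into the existence of a function with a single simple pole.
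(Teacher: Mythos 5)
Your proposal is correct and would yield a complete proof, but the decisive step is handled by a genuinely different method than the paper's. Both arguments share the same skeleton: reduce to irreducibility over $K=\overline\f_p(Y)$ by primitivity and Gauss's lemma, pass to the quadratic in $W=X^p-X+b$ and adjoin a square root of its discriminant to get the degree-$2$ layer $L/K$, and then show that the remaining Artin--Schreier layer $X^p-X-(r_1-b)$ has degree $p$ over $L$. For that last step the paper stays inside $K$: assuming reducibility, it writes a root as $A\Delta+B$ with $A,B\in K$ (where $\Delta^2$ is your discriminant), extracts the identity $A^p(\Delta^2)^{(p-1)/2}-2A-1=0$, and refutes it by a degree count when $A$ is a polynomial and a valuation count at a finite pole of $A$ otherwise; the only geometric input is squarefreeness of the discriminant, obtained from a short gcd computation on $Y^{p+1}-Y^2+4$. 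You instead invoke the Artin--Schreier irreducibility criterion and rule out $r_1\in\wp(L)$ by a pole analysis on the hyperelliptic curve: $r_1$ has a single pole, of order exactly $p$, so a $\wp$-preimage would be a nonconstant function with a single simple pole, impossible on a curve of genus $\ge 3$. Your route is conceptually cleaner and explains \emph{why} the computation must succeed, at the price of needing the genus (hence control of the squarefree part of the discriminant and of the splitting at infinity); the paper's route is more elementary and self-contained. One blemish in your write-up: after showing that a common root $\alpha$ of your $\Delta$ and $g$ must satisfy $\alpha^2=-4$, you conclude both that such points are ``at most double roots'' and that ``$\Delta$ has simple roots''; the latter does not follow from the former as written. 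In fact $g(\alpha)=0$ together with $\Delta(\alpha)=0$ forces $\alpha^{p-1}=2$, while $\alpha^2=-4$ gives $\alpha^{p-1}=(-1)^{(p-1)/2}=\pm1$, a contradiction for $p>3$, so $\Delta$ really is squarefree; and in any case your hedged fallback (squarefree part of even degree at least $2p-2$, hence not a square and genus at least $p-2$) already suffices for everything you use, so this is a presentational slip rather than a gap.
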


\begin{proof}
View $F(X,Y)$ as a polynomial in $X$ over $\overline\f_p[Y]$; it is clearly primitive (the gcd of the coefficients is $1$). Hence it suffices to show that $F(X,Y)$ is irreducible over $\overline\f_p(Y)$. Let $x$ be a root of $F(X,Y)$ and let $z=x^p-x+b$. It suffices to show that $[\overline\f_p(x,Y):\overline\f_p(Y)]=2p$, and to this end, it suffices to show that $[\overline\f_p(z,Y):\overline\f_p(Y)]=2$ and $[\overline\f_p(x,Y):\overline\f_p(z,Y)]=p$.

\[
\beginpicture
\setcoordinatesystem units <1em,1em> point at 0 0

\setlinear
\plot 0 1  0 3 /
\plot 0 5  0 7 /

\put {$\overline\f_p(Y)$} at 0 0
\put {$\overline\f_p(z,Y)$} at 0 4
\put {$\overline\f_p(x,Y)$} at 0 8
\put {$\scriptstyle 2$} [l] at 0.4 2
\put {$\scriptstyle p$} [l] at 0.4 6

\endpicture
\]
\smallskip

$1^\circ$ We claim that $[\overline\f_p(z,Y):\overline\f_p(Y)]=2$. We have
\[
z^2+(Y^p-Y)z+1-Y^{p-1}=0.
\]
Hence
\[
z=\frac 12(Y-Y^p+\Delta),
\]
where 
\[
\Delta^2=(Y^p-Y)^2-4(1-Y^{p-1})=(Y^{p-1}-1)(Y^2(Y^{p-1}-1)+4).
\]
Obviously, $\Delta\notin\overline\f_p(Y)$, so $z$ is of degree $2$ over $\overline\f_p(Y)$.

\medskip
$2^\circ$ We claim that $[\overline\f_p(x,Y):\overline\f_p(z,Y)]=p$. Since $x$ is a root of $X^p-X-z+b$, the claim is equivalent to saying that $X^p-X-z+b$ is irreducible over $\overline\f_p(z,Y)$. Assume to the contrary that $X^p-X-z+b$ is reducible over $\overline\f_p(z,Y)=\overline\f_p(\Delta,Y)$. Then all its roots are in $\overline\f_p(\Delta,Y)$, so $x=A\Delta+B$ for some $A,B\in\overline\f_p(Y)$ with $A\ne 0$. Then 
\begin{align*}
0\,&=(A\Delta+B)^p-(A\Delta+B)-z+b\cr
&=A^p\Delta^p+B^p-A\Delta-B-\frac 12(Y-Y^p+\Delta)+b\cr
&=\Delta\Bigl(A^p\Delta^{p-1}-A-\frac 12\Bigr)+B^p-B-\frac 12(Y-Y^p)+b\cr
&=\frac 12\Delta\bigl(A^p((Y^{p-1}-1)(Y^2(Y^{p-1}-1)+4))^{(p-1)/2}-2A-1\bigr)
\cr
&\phantom{=\,} -\frac 12(Y-Y^p)+B^p-B+b.
\end{align*}
Hence
\begin{equation}\label{3.1}
A^p((Y^{p-1}-1)(Y^2(Y^{p-1}-1)+4))^{(p-1)/2}-2A-1=0.
\end{equation}
We claim that $(Y^{p-1}-1)(Y^2(Y^{p-1}-1)+4)$ has no multiple zeros. For this claim, it suffices to show that $Y^2(Y^{p-1}-1)+4=Y^{p+1}-Y^2+4$ has no multiple zeros. In fact,
\begin{align*}
&\text{gcd}(Y^{p+1}-Y^2+4,(Y^{p+1}-Y^2+4)')=\text{gcd}(Y^{p+1}-Y^2+4,Y^p-2Y)\cr
=\,&\text{gcd}(Y^{p+1}-Y^2+4,Y^{p-1}-2)=\text{gcd}(Y^2+4,Y^{p-1}-2)\cr
=\,&\text{gcd}(Y^2+4,(-4)^{(p-1)/2}-2)=\text{gcd}(Y^2+4,(-1)^{(p-1)/2}-2)=1.
\end{align*}

If $A\in\overline\f_p[Y]$, then the left side of \eqref{3.1} is a polynomial in $Y$ of degree $>0$, which is a contradiction. Now assume that $A$ has a pole $P\ne\infty$, and let $\nu_P$ denote the valuation of $\overline\f_p(Y)$ at $P$. Then 
\begin{align*}
&\nu_P\bigl(A^p((Y^{p-1}-1)(Y^2(Y^{p-1}-1)+4))^{(p-1)/2}\bigr)\cr
=\,&p\,\nu_P(A)+\frac{p-1}2\nu_P\bigl((Y^{p-1}-1)(Y^2(Y^{p-1}-1)+4)\bigr)\cr
\le\,& p\,\nu_P(A)+\frac{p-1}2<\nu_P(A),
\end{align*}
which is a contradiction to \eqref{3.1}.
\end{proof}

\section{The Case $n=2$}

\begin{thm}\label{T3.1}
Assume that $p\ge 3$ and $b\in\f_{p^2}$ is such that $\text{\rm Tr}_{p^2/p}(b)\ne 0$. Then $f_b(X)$ permutes $\f_{p^2}$ if and only if $\text{\rm Tr}_{p^2/p}(b)=\pm 1$.
\end{thm}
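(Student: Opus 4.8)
The plan is to reduce the permutation question for $n=2$ to solvability of the equation $F(x,y)=0$ with $y\neq 0$, exactly as in Section~2, but now I cannot appeal to the Hasse--Weil bound (it is far too weak for $n=2$). Instead I want an exact count. Recall $f_b$ permutes $\f_{p^2}$ if and only if $F(X,Y)$ has no zero $(x,y)\in\f_{p^2}^2$ with $y\neq 0$, where
\[
F(X,Y)=(X^p-X+b)^2+(Y^p-Y)(X^p-X+b)+1-Y^{p-1}.
\]
So the task is to decide, in terms of $\mathrm{Tr}_{p^2/p}(b)$, whether $F$ has such a zero over $\f_{p^2}$.

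First I would set $z=x^p-x+b$ and treat the defining relation as a quadratic in $z$, namely $z^2+(y^p-y)z+(1-y^{p-1})=0$. The key structural feature for $n=2$ is that Frobenius has order $2$: writing $\bar a=a^p$ for $a\in\f_{p^2}$, the quantity $x^p-x$ satisfies $(x^p-x)^p=x-x^p=-(x^p-x)$, so $x^p-x$ lies in the $(-1)$-eigenspace of Frobenius; equivalently $\mathrm{Tr}_{p^2/p}(x^p-x)=0$. Thus $z=x^p-x+b$ ranges, as $x$ runs over $\f_{p^2}$, over the coset $b+\ker(\mathrm{Tr}_{p^2/p}\text{ restricted to }\{a:\bar a=-a\})$; more precisely $\mathrm{Tr}_{p^2/p}(z)=\mathrm{Tr}_{p^2/p}(b)$ and every such $z$ with the correct trace is attained (since $a\mapsto a^p-a$ surjects onto the kernel of the trace). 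This is the novel ``solving in $x,x^p,y,y^p$'' idea alluded to in the introduction: I replace the pair $(x,x^p)$ by the single trace-constrained variable $z$, and similarly decompose $y$.

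The main computation, then, is to express the existence of a solution purely through the relations $z+\bar z=\mathrm{Tr}_{p^2/p}(b)=:t$ together with $z^2+(y^p-y)z+1-y^{p-1}=0$ and its Frobenius conjugate, eliminating $y$ and $z$. **The hard part will be** the elimination: I would apply Frobenius to the quadratic relation to get a companion equation in $\bar z,\bar y$, use $z+\bar z=t$ and the analogous constraint on $y$ (writing $y=u+v$ with $u\in\f_p$ and $\bar v=-v$, or parametrising $y^p-y$ and $y^{p-1}=y^p/y=(y-(y-y^p))/y$), and grind the resulting system down to a condition on $t$ alone. I expect the algebra to collapse to something like ``a solution with $y\neq 0$ exists unless $t^2=1$,'' which is exactly the claimed criterion $\mathrm{Tr}_{p^2/p}(b)=\pm1$; I anticipate that $t=\pm1$ forces the only solutions of the system to have $y=0$ (consistent with $F(X,0)=z^2+1$, whose solvability is governed by whether $-1$ is attainable as $z^2$ under the trace constraint).

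Finally I would handle the two directions separately and check the boundary/degenerate cases by hand. For the ``only if'' direction I exhibit, whenever $t\neq\pm1$, an explicit zero $(x,y)$ with $y\neq 0$, thereby showing $f_b$ is not injective. For the ``if'' direction ($t=\pm1$) I verify that the eliminated system has no solution with $y\neq 0$, so $F$ vanishes on $\f_{p^2}^2$ only when $y=0$, and hence $f_b$ permutes. Throughout I must keep track of the nonvanishing of denominators, i.e.\ that $z\neq0$ and $(x+y)^p-(x+y)+b\neq0$ at any purported solution, so that the factorisation $f_b(x+y)-f_b(x)=y(z^2+\cdots)/(\cdots)$ used to set up $F$ remains valid; degenerate cases where a denominator vanishes correspond to $x$ or $x+y$ being a pole of $f_b$ and should be excluded or treated directly. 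The small prime $p=3$ is already covered by the result of Yuan et al., so I may assume $p>3$ in the generic elimination and simply note consistency at $p=3$.
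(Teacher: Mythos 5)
Your setup and the elimination step are exactly the paper's: reduce to the solvability of $F(x,y)=0$ with $y\ne 0$, set $z=x^p-x+b$, apply Frobenius to the quadratic $z^2+(y^p-y)z+1-y^{p-1}=0$ to get its conjugate, use $z+z^p=\tau$ to solve for $z$ in terms of $y$, and substitute back. Your plan for the ($\Leftarrow$) direction is sound and matches the paper: when $\tau^2=1$ the eliminated equation factors, each factor forces $y\in\f_p$ and then $y=0$, a contradiction. (A small simplification you missed: the denominators $x^p-x+b$ never vanish because their trace equals $\mathrm{Tr}_{p^2/p}(b)\ne 0$, so no degenerate cases need excluding.)

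The genuine gap is in the ($\Rightarrow$) direction. The elimination does \emph{not} ``collapse to a condition on $t$ alone'': for $\tau\ne 0,\pm1$ it produces an equation $G(y,y^p)=0$, where $G(X,Y)$ is an explicit polynomial of degree $6$ with coefficients in $\f_p$ depending on $\tau^2$, and you must prove this has a root $y\in\f_{p^2}\setminus\f_p$. Saying you will ``exhibit an explicit zero'' is not a method; no closed-form solution is available. Worse, you explicitly discarded the one tool that works: you declare the Hasse--Weil bound ``far too weak for $n=2$,'' which is true for the degree-$2p$ curve $F$ over $\f_{p^2}$, but the whole point of the elimination is that $G$ has degree bounded independently of $p$ and is defined over the \emph{prime field} $\f_p$, where Hasse--Weil gives error terms of order $\sqrt p$ against a main term of $p$. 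The paper exploits the symmetry of $G$ to write $G(X,Y)=H(X+Y,XY)$, proves $G$ (hence $H$) absolutely irreducible (a separate lemma requiring $\tau^2\ne 1$, which is where the hypothesis enters), and then applies Hasse--Weil twice over $\f_p$: an upper bound on $|V_{\f_p^2}(G)|$ and a lower bound on $|V_{\f_p^2}(H)|$ show that for $p>2^{10}$ some $(u,v)\in V_{\f_p^2}(H)$ is not the image of an $\f_p$-point of $G$, so $X^2-uX+v$ is irreducible over $\f_p$ and its root $y\in\f_{p^2}\setminus\f_p$ satisfies $G(y,y^p)=0$; small $p$ are checked by computer. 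Without this counting mechanism (or a substitute for it), your proposal cannot complete the ``only if'' direction.
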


\begin{proof}
Let $\tau=\text{Tr}_{p^2/p}(b)$ and let $F(X,Y)$ be the polynomial in \eqref{2.1a}. Recall that $f_b$ does not permute $\f_{p^n}$ if and only if $F(X,Y)$ has a zero $(x,y)\in\f_{p^n}$ with $y\ne 0$. 

($\Leftarrow$) Assume to the contrary that $f_b(X)$ does not permute $\f_{p^2}$. Then $F(x,y)=0$ for some $(x,y)\in\f_{p^2}^2$ with $y\ne 0$, i.e., 
\begin{equation}\label{4.2}
z^2+(y^p-y)z+1-y^{p-1}=0,
\end{equation}
where $z=x^p-x+b$.
Raising \eqref{4.2} to the power of $p$ gives
\begin{equation}\label{4.3}
(\tau-z)^2+(y-y^p)(\tau-z)+1-y^{1-p}=0.
\end{equation}
Subtracting \eqref{4.3} from \eqref{4.2} yields
\begin{equation}\label{3.3a}
\tau(2z-\tau)-(y-y^p)\tau-y^{p-1}+y^{1-p}=0.
\end{equation}
Hence 
\begin{equation}\label{3.4a}
z=\frac 12(\tau+y-y^p+\tau^{-1}y^{p-1}-\tau^{-1}y^{1-p}).
\end{equation}
Making this substitution in \eqref{4.2} and using the fact $\tau^2=1$, we have
\[
-\frac 14y^{-2-2p}(y^2+y^{2p}-y^{1+p}-y^{2+p}+y^{1+2p})(-y^2-y^{2p}+y^{1+p}-y^{2+p}+y^{1+2p})=0.
\]
Without loss of generality, assume that 
\begin{equation}\label{4.4}
y^2+y^{2p}-y^{1+p}-y^{2+p}+y^{1+2p}=0. 
\end{equation}
Then $2y^{1+p}y=y^2+y^{2p}-y^{1+p}+y^{2+p}+y^{1+2p}\in\f_p$, and hence $y\in\f_p$. Now \eqref{4.4} becomes $y^2=0$, which is a contradiction.

\medskip
($\Rightarrow$)
Assume to the contrary that $\tau\ne 0,\pm 1$. We show that there exists $(x,y)\in\f_{p^2}^2$ with $y\ne 0$ such that $F(x,y)=0$. 
We assume that $p>2^{10}$. (For $p<2^{10}$, the claim has been verified by computer.)

Recall that 
\begin{equation}\label{2.1}
F(X,Y)=Z^2+(Y^p-Y)Z+1-Y^{p-1},
\end{equation}
where $Z=X^p-X+b$. Assume temporarily that $(z,y)\in\f_{q^2}^2$ is such that $y\ne0$, $\text{Tr}_{p^2/p}(z)=\text{Tr}_{p^2/p}(b)=\tau$ and
\begin{equation}\label{2.2}
z^2+(y^p-y)z+1-y^{p-1}=0.
\end{equation}
Then \eqref{4.3} -- \eqref{3.4a} still holds. Making the substitution \eqref{3.4a} in \eqref{2.2} yields
\begin{align}\label{2.5}
\frac 14\tau^{-2}y^{-2-2p}&\bigl(y^4+y^{4p}-2\tau^2y^{3+p}+(-2+4\tau^2+\tau^4)y^{2+2p}\\
&-\tau^2y^{4+2p}-2\tau^2y^{1+3p}+2\tau^2y^{3+3p}-\tau^2y^{2+4p}\bigr)=0.\nonumber
\end{align}
We write \eqref{2.5} as
\begin{equation}\label{2.6}
G(y,y^p)=0,
\end{equation}
where 
\begin{align}\label{2.7}
G(X,Y)=\,&X^4+Y^4-2\tau^2X^3Y+(-2+4\tau^2+\tau^4)X^2Y^2\\
&-\tau^2 X^4Y^2-2\tau^2 XY^3+2\tau^2 X^3Y^3-\tau^2 X^2Y^4\in\f_p[X,Y].\nonumber
\end{align}

We now remove the temporary assumption about $(z,y)$. We will show that \eqref{2.6} has a solution $y\in\f_{p^2}\setminus\f_p$. Once this is done, the proof of the theorem is completed as follows: Let $z$ be given by \eqref{3.4a}. Since $\text{Tr}_{p^2/p}(z)=\tau=\text{Tr}_{p^2/p}(b)$, there exists $x\in\f_{p^2}$ such that $z=x^p-x+b$. It follows that in \eqref{2.1}, $F(x,y)=0$.

Let 
\[
V_{\f_p^2}(G)=\{(x,y)\in\f_p^2:G(x,y)=0\}.
\]
By Lemma~\ref{L2.2}, $G(X,Y)$ is absolutely irreducible. Then by the Aubry-Perret version of the Hasse-Weil bound, 
\[
|V_{\f_p^2}(G)|\le p+1+(6-1)(6-2)p^{1/2}-3,
\]
where $3$ is the number of zeros on $G$ at infinity. Thus
\[
|V_{\f_p^2}(G)|\le p+20p^{1/2}-2.
\]
$G(X,Y)$ is a symmetric polynomial in $X$ and $Y$, which can be written as
\begin{equation}\label{2.8}
G(X,Y)=H(X+Y,XY),
\end{equation}
where 
\begin{equation}\label{2.9}
H(X,Y)=X^4-(4+2\tau^2)X^2Y+(8\tau^2+\tau^4)Y^2-\tau^2X^2Y^2+4\tau^2Y^3.
\end{equation}
Since $G(X,Y)$ is absolutely irreducible, so is $H(X,Y)$. Again, by the Aubry-Perret version of the Hasse-Weil bound, 
\[
|V_{\f_p^2}(H)|\ge p+1-(4-1)(4-2)p^{1/2}-3,
\]
where $3$ is the number of zeros on $H$ at infinity. Thus
\[
|V_{\f_p^2}(H)|\ge p-6p^{1/2}-2.
\]
Note that $G(X,X)=\tau^4X^4$. Hence $V_{\f_p^2}(G)\cap\{(x,x):x\in\f_p\}=\{(0,0)\}$. The map
\[
\begin{array}{cccc}
\phi:& V_{\f_p^2}(G)\setminus\{(0,0)\}&\longrightarrow & V_{\f_p^2}(H)\cr
&(x,y)&\longmapsto &(x+y,xy)
\end{array}
\]
is 2-to-1. Hence
\[
|\phi(V_{\f_p^2}(G))|=1+\frac 12(|\phi(V_{\f_p^2}(G))|-1)\le 1+\frac 12(p+20p^{1/2}-3)=\frac 12(p+20p^{1/2}-1).
\]
Since $p>2^{10}$,
\[
\frac 12(p+20p^{1/2}-1)< p-6p^{1/2}-2\le |V_{\f_p^2}(H)|.
\]
Therefore, there exists $(u,v)\in V_{\f_p^2}(H)\setminus\phi(V_{\f_p^2}(G))$. It follows that the quadratic polynomial $X^2-uX+v$ is irreducible over $\f_p$. Let $y\in\f_{p^2}\setminus\f_p$ be a root of $X^2-uX+v$. Then $y+y^p=u$ and $yy^p=v$. Hence 
\[
G(y,y^p)=H(u,v)=0.
\]
\end{proof}

\begin{lem}\label{L2.2}
Assume that $\tau\ne 0,\pm1$. The polynomial $G(X,Y)$ in \eqref{2.7} is absolutely irreducible.
\end{lem}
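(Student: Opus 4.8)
The plan is to exploit the symmetry recorded in \eqref{2.8}--\eqref{2.9} and prove absolute irreducibility through a chain of double covers. Put $s=X+Y$, $t=XY$ and $W=s^2$. Since the polynomial $H$ in \eqref{2.9} involves its first variable only through its square, $H(s,t)=\tilde H(s^2,t)$ for a polynomial $\tilde H(W,t)$ that is a quadratic in $W$. I consider the three curves $C:\tilde H(W,t)=0$, $C':H(s,t)=0$ and $C'':G(X,Y)=0$, together with the degree-$2$ morphisms $C''\to C'$, $(X,Y)\mapsto(s,t)$, and $C'\to C$, $(s,t)\mapsto(s^2,t)$, and the degree-$2$ projection $C\to\Bbb P^1_t$. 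The usual dictionary then reduces the lemma to three nonsquare statements: $\tilde H$ is irreducible over $\overline\f_p(t)$ iff $C$ is irreducible; assuming $C$ irreducible, $C'$ is irreducible iff $W=s^2$ is a nonsquare in $\overline\f_p(C)$; and assuming $C'$ irreducible, $C''$ is irreducible iff $s^2-4t=(X-Y)^2$ is a nonsquare in $\overline\f_p(C')$ (because $\overline\f_p(C'')=\overline\f_p(C')(\sqrt{s^2-4t})$).

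The first two are short. For $C$, write $\tilde H(W,t)=W^2-[(4+2\tau^2)t+\tau^2t^2]W+(8\tau^2+\tau^4)t^2+4\tau^2t^3$; its discriminant in $W$ is $t^2B(t)$ with $B(t)=\tau^4t^2+4\tau^2(\tau^2-2)t+16(1-\tau^2)$, and $B$ has discriminant $16\tau^8\ne0$ and $B(0)=16(1-\tau^2)\ne0$, so $t^2B(t)$ is not a square in $\overline\f_p(t)$ and $C$ is irreducible. For $C'$, I look at the place $t=0$: the two branches of $C$ there satisfy $W\sim c_\pm t$ with $c_\pm=\tfrac12\bigl(4+2\tau^2\pm4\sqrt{1-\tau^2}\bigr)$ distinct and nonzero — this is the first point where $\tau\ne0,\pm1$ enters — so $t=0$ splits into two places at each of which $W$ has valuation $1$; hence $W$ is a nonsquare and $H(s,t)$ is absolutely irreducible.

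The third statement is where I expect the genuine difficulty. Writing $D=s^2-4t$, I would first compute $\operatorname{div}(D)$ on $C'$ and find that every multiplicity is even: the only affine zeros lie over the origin (the two branches, each to order $2$), the only poles are at the points at infinity $[\pm\tau:1]$ (each to order $2$), and $D$ tends to the nonzero constant $\tau^2$ at the remaining point at infinity $[0:1]$. Thus no valuation obstruction is available, and $D$ is a nonsquare iff $\tfrac12\operatorname{div}(D)$ is nonprincipal. Comparing with $\operatorname{div}(s)$ — whose affine zeros are the two branches over the origin together with the smooth point $R=(0,-(8+\tau^2)/4)$, and whose poles are the three points at infinity — I would reduce $\tfrac12\operatorname{div}(D)$ to the divisor class of $[0:1]-R$. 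As $[0:1]$ and $R$ are distinct points, this class is nontrivial precisely when $C'$ has positive genus.

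Hence everything comes down to a genus computation, which I expect to be the main obstacle. I would show that for $\tau\ne0,\pm1$ the irreducible quartic $C'$ has exactly one singularity, a tacnode at the origin (the two smooth branches $s^2\sim c_\pm t$ with $c_+\ne c_-$) with $\delta=2$, while $[0:1]$ and $R$ are smooth; then its geometric genus is $3-2=1$. The only other possible singular points are the solutions of $H=H_s=H_t=0$ with $s\ne0$; eliminating $s$ forces $t$ to be a common root of $B(t)$ and of $B_2(t)=\tau^4t^2+3\tau^2(\tau^2-2)t+8(1-\tau^2)$, and a short computation shows $B$ and $B_2$ have a common root only for $\tau\in\{0,\pm1\}$. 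Thus for every admissible $\tau$ the genus is $1$, forcing $[0:1]\not\sim R$, so $D$ is a nonsquare and $G$ is absolutely irreducible. It is exactly this dichotomy — genus $1$ when $\tau\ne0,\pm1$ and an extra node dropping the genus to $0$ at the excluded values — that I regard as the crux and that explains the hypothesis on $\tau$.
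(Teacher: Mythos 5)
Your route is entirely different from the paper's, which is elementary: the paper homogenizes $G$ as $A(X,Y)Z^2-\tau^2X^2Y^2(X-Y)^2$, sets $Y=1$, notes that reducibility of this quadratic in $Z$ would force the quartic $A(X,1)$ to be a perfect square in $\overline\f_p[X]$, and kills that by comparing four coefficients (this is the only place $\tau\ne\pm1$ enters). Your tower of double covers is a legitimate alternative, and most of your computations check out: $B$, $B_2$, the identity $H(s,s^2/4)=\tfrac{\tau^4}{16}s^4$ behind the divisor of $D=s^2-4t$, the reduction of $\tfrac12\operatorname{div}(D)$ to the class of $[0:1]-R$ via $\operatorname{div}(s)$, and the resultant of $B$ and $B_2$ (which equals $-16\tau^8(1-\tau^2)$ up to sign) are all correct where they apply. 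But the price of the geometric route is that every exceptional configuration must be accounted for, and you have missed one.

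The genuine gap is the case $\tau^2=-8$, which is admissible whenever $-2$ is a square modulo $p$ (e.g.\ $\tau=5$ for $p=11$, and infinitely many $p>2^{10}$, which is the range where Theorem~\ref{T3.1} actually invokes the lemma). You assert that $c_\pm$ are nonzero and that the origin is a tacnode with $\delta=2$; however $c_+c_-=\tau^2(\tau^2+8)$, and the quadratic part of $H$ at the origin is $(8\tau^2+\tau^4)t^2=\tau^2(\tau^2+8)t^2$. When $\tau^2=-8$, one of $c_\pm$ vanishes, the origin becomes an ordinary triple point with tangent cone $4t(3s^2-8t^2)$ and $\delta=3$, the point $R=(0,-(8+\tau^2)/4)$ collides with it, and the geometric genus of $C'$ drops to $0$. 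So your claimed dichotomy --- genus $1$ for every admissible $\tau$, genus $0$ only at the excluded values --- is false, and the final step ``$[0:1]\not\sim R$ because the genus is positive'' collapses exactly where it is needed. The lemma is still true there, and in fact more easily: $W$ then has valuations $1$ and $2$ at the two places over $t=0$ (since the product of the roots of $\tilde H$ is $\tau^2t^2(8+\tau^2+4t)$), so step two survives by parity, and $D=s^2-4t\sim-4t$ has valuation $1$ on each branch tangent to $3s^2=8t^2$, so step three can be replaced by a local parity argument. But as written your proof does not cover this case; you must either treat $\tau^2=-8$ separately along these lines or reformulate steps two and three so that they do not presuppose $c_\pm\ne0$, a tacnodal origin, and genus $1$.
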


\begin{proof}
Assume to the contrary that $G(X,Y)$ is not absolutely irreducible. Let $t=\tau^2$ and consider the homogenization $\overline G(X,Y,Z)$ of $G(X,Y)$:
\[
\overline G(X,Y,Z)=A(X,Y)Z^2-tX^2Y^2(X-Y)^2,
\]
where 
\[
A(X,Y)=X^4-2tX^3Y+(-2+4t+t^2)X^2Y^2-2tXY^3+Y^4.
\]
Since $\overline G(X,Y,Z)$ is reducible over $\overline\f_p$, so is $\overline G(X,1,Z)$. We have
\[
\overline G(X,1,Z)=A(X,1)Z^2-tX^2(X-1)^2,
\]
where 
\[
A(X,1)=X^4-2tX^3+(-2+4t+t^2)X^2-2tX+1.
\]
Since $\text{gcd}(A(X,1),tX^2(X-1)^2)=1$, $\overline G(X,1,Z)$ is a product of two linear polynomials in $Z$ over $\overline\f_p[X]$. It follows that $A(X,1)$ is a square in $\overline\f_p[X]$, that is,
\[
X^4-2tX^3+(-2+4t+t^2)X^2-2tX+1-(X^2+\alpha X+\beta)^2=0
\]
for some $\alpha,\beta\in\overline\f_p$. Comparing the coefficients gives
\begin{gather}
\label{2.10} 
1-\beta^2=0,\\
\label{2.11}
-2(\alpha \beta+t)=0,\\
\label{2.12}
-2-\alpha^2-2\beta+4t+t^2=0,\\
\label{2.13}
-2(\alpha+t)=0.
\end{gather}
By \eqref{2.13}, we have $\alpha=-t$. Then \eqref{2.11} gives $\beta=1$. Now \eqref{2.12} becomes $4(-1+t)=0$, i.e., $t=1$, which is a contradiction.
\end{proof}

\begin{rmk}\label{R4.3}\rm
Theorem~\ref{T3.1} and its proof hold verbatim if we replace $p$ by $q=p^n$ (still assuming $p\ge 3$).
\end{rmk}

\section{Remarks and a Conjecture}

Recall that
\[
f_b(X)=X+\frac 1{X^p-X+b},
\]
where $b\in\f_{p^n}$ is such that $\text{Tr}_{p^n/p}(b)\ne 0$. If $b_1\in\f_{p^n}$ is such that $\text{Tr}_{p^n/p}(b_1)=\epsilon\text{Tr}_{p^n/p}(b)$, where $\epsilon=\pm 1$, then $b_1=\epsilon b+c^p-c$ for some $c\in\f_{p^n}$. Therefore
\begin{align*}
f_b(\epsilon X+c)\,&=\epsilon X+c+\frac 1{\epsilon(X^p-X)+\epsilon(c^p-c)+b}\cr
&=\epsilon\Bigl(X+\frac 1{X^p-X+c^p-c+\epsilon b}\Bigr)+c\cr
&=\epsilon\Bigl(X+\frac 1{X^p-X+b_1}\Bigr)+c=\epsilon f_{b_1}(X)+c.
\end{align*}
Hence $f_b(X)$ permutes $\f_{p^n}$ if and only if $f_{b_1}(X)$ does. In particular, when studying the permutation properties of $f_b(X)$ over $\f_{p^n}$ for $p\ge 5$ and $n\le 4$, we may assume that $b\in\{1,2,\dots,(p-1)/2\}$ since for such $b$, $\text{Tr}_{p^n/p}(b)$ takes all values of $\f_p^*$ modulo a $\pm$ sign.

The family $f_b$ contains three parameters: $p,n,b$. When $p\le 3$ or $n\ge 5$, all triples $(p,n,b)$ so that $f_b$ permutes $\f_{p^n}$ have been determined. The remaining cases are $n=3,4$ with $p\ge 5$. Extensive computer search indicates that $f_b$ never permutes $\f_{p^n}$ in these two cases, which leads us to the following conjecture.

\begin{conj}\label{C4.1}
Let $p\ge 5$, $n=3$ or $4$, and $b\in \f_{p^n}$ be such that $\text{\rm Tr}_{p^n/p}(b)\ne 0$. Then $f_b$ does not permute $\f_{p^n}$.
\end{conj}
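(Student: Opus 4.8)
The plan is to follow the descent of Section~3 rather than the direct estimate of Section~2, because for $n=3,4$ the bound \eqref{2.2a} is vacuous: the error term $(2p-1)(2p-2)p^{n/2}\sim 4p^{2+n/2}$ swamps the main term $p^n$ (for $n=3$, $4p^{3.5}\gg p^3$; for $n=4$, $4p^4\gg p^4$), so the curve $F$ must be replaced by an object whose degree does not grow with $p$. First I would recast the non-permutation condition. By the reduction in Section~2, $f_b$ fails to permute $\f_{p^n}$ iff there is $y\in\f_{p^n}^*$ and a root $z\in\f_{p^n}$ of $z^2+(y^p-y)z+1-y^{p-1}=0$ with $\text{Tr}_{p^n/p}(z)=\tau$. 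Writing $z=\tfrac12\bigl(y-y^p\pm\sqrt{D(y)}\bigr)$ with $D(y)=(y^p-y)^2-4(1-y^{p-1})$, and using $\text{Tr}_{p^n/p}(y-y^p)=0$, this is equivalent to the clean assertion
\[
\exists\, y\in\f_{p^n}^*:\ D(y)\ \text{is a square in }\f_{p^n}\ \text{ and }\ \text{Tr}_{p^n/p}\bigl(\sqrt{D(y)}\,\bigr)=\pm 2\tau .
\]
(The pole locus $z=0$ is harmless: from the quadratic it forces $y\in\f_p^*$, whence $D(y)=0$ and $\text{Tr}_{p^n/p}(\sqrt{D})=0\ne\pm 2\tau$ since $\tau\ne 0$.)

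Next I would introduce an auxiliary coordinate $w$ with $w^2=D(y)$ and impose $\text{Tr}_{p^n/p}(w)=2\tau$ (the sign being immaterial, as $w\mapsto-w$ fixes $w^2=D(y)$). Because the trace lives over the prime field, the correct object is the Weil restriction $\mathcal W=\text{Res}_{\f_{p^n}/\f_p}\{w^2=D(y)\}$ cut by the single $\f_p$-linear equation $\sum_{i=0}^{n-1}w^{p^i}=2\tau$. Clearing $y^{p-1}=y^p/y$ and passing to Frobenius coordinates $Y_i=y^{p^i}$, $W_i=w^{p^i}$ gives defining equations $Y_iW_i^2=Y_i(Y_{i+1}-Y_i)^2-4Y_i+4Y_{i+1}$ of degree bounded independently of $p$, so $\mathcal W$ is a variety over $\f_p$ of dimension $n-1$ and $p$-independent degree; for $n=2$ it is precisely the curve encoded by $G$ and $H$ in Section~3. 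Assuming $\mathcal W$ is absolutely irreducible, a Lang--Weil estimate yields $|\mathcal W(\f_p)|=p^{\,n-1}+O(p^{\,n-3/2})$, while the excluded loci ($y=0$, or points where $\sqrt{D}\notin\f_{p^n}$) have dimension $\le n-2$ and contribute only $O(p^{\,n-2})$. Hence for all large $p$ an admissible $y$ exists and $f_b$ does not permute $\f_{p^n}$; the finitely many small primes would be disposed of by machine, exactly as in Sections~2 and~3.

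The main obstacle — and, I expect, the reason the statement is still only conjectured — is the absolute irreducibility of $\mathcal W$, the analogue of Lemma~\ref{L2.2}. For $n=2$ this collapsed to a one-line coefficient comparison because $\mathcal W$ was a plane curve of degree $6$; for $n=3,4$ there is no such reduction, and one must control a surface, resp.\ a threefold. I would attack it through the singular locus of a generic plane section, by checking that the quadric double cover $w^2=D(y)$ remains irreducible after intersection with the trace hyperplane, and — the delicate point — by ruling out that this hyperplane splits $\text{Res}\{w^2=D(y)\}$ into Galois-conjugate components. A secondary, quantitative obstacle is that the Lang--Weil implied constant grows with $\deg\mathcal W$, so the degree must be kept small enough that $p^{n-1}$ genuinely dominates, with a correspondingly larger range for the small-$p$ verification. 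If absolute irreducibility can be proved uniformly in $p$ and $\tau$, the remaining steps are routine; should it fail for sporadic $\tau$, those values would have to be handled separately, much as the exceptional $\tau=\pm 1$ emerged in the $n=2$ case.
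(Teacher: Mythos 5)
The statement you are addressing is Conjecture~\ref{C4.1}: the paper offers no proof of it, only computational evidence, so there is no argument of the authors' to compare yours against. Your diagnosis of \emph{why} it is left open is accurate --- the bound \eqref{2.2a} is indeed vacuous for $n=3,4$ because the degree of $F$ grows with $p$ --- and your reformulation is sound: $f_b$ fails to permute $\f_{p^n}$ iff some $y\in\f_{p^n}\setminus\f_p$ has $D(y)=(y^p-y)^2-4(1-y^{p-1})$ a square in $\f_{p^n}$ with $\text{Tr}_{p^n/p}(\sqrt{D(y)})=\pm 2\tau$, and passing to the Weil restriction in Frobenius coordinates is the natural generalization of the $(y+y^p,yy^p)$ device of Section~3. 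But what you have written is a research plan, not a proof. The entire difficulty is concentrated in the step you label ``assuming $\mathcal W$ is absolutely irreducible'': that is the analogue of Lemma~\ref{L2.2}, it is the only nontrivial input to the Lang--Weil count, and you give no argument for it beyond naming possible lines of attack. Until that lemma is proved (uniformly in $p$ and $\tau$, or with the exceptional $\tau$ classified), the conjecture remains exactly as open as before.

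There are also two concrete defects in the setup as written. First, after clearing the denominator in $y^{p-1}=y^p/y$, the system $Y_iW_i^2=Y_i(Y_{i+1}-Y_i)^2-4Y_i+4Y_{i+1}$ ($i=0,\dots,n-1$) together with $\sum_i W_i=2\tau$ vanishes identically on the linear locus $Y_0=\dots=Y_{n-1}=0$, which has dimension $n-1$ --- the \emph{same} dimension as the component you want to count. So the variety cut out by your equations is visibly reducible, your blanket claim that the excluded loci have dimension $\le n-2$ is false as stated, and you must first saturate away this spurious component before absolute irreducibility can even be posed; this also means the Lang--Weil main term must be attributed to the correct component, not to $\mathcal W$ as defined. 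Second, even granting irreducibility, the geometric question is genuinely harder than in Lemma~\ref{L2.2}: geometrically $\mathcal W$ is a product of $n$ conjugate curves $w^2=D(y)$ sliced by one hyperplane, and while the product is irreducible, a specific (non-generic) hyperplane section of a variety of dimension $n\ge 3$ can perfectly well split; Bertini gives connectedness, not irreducibility, and ruling out a splitting into Galois-conjugate pieces is precisely the kind of argument the $n=2$ case avoided by reducing to a plane sextic. None of this says the strategy is wrong --- it is probably the right first thing to try --- but the conjecture is not proved by it.
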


As we have seen above, whether $f_b$ permutes $\f_{p^n}$ depends on $\text{\rm Tr}_{p^n/p}(b)^2$ rather than $b$. For $n=4$, we may assume that $0\ne b\in\f_p$ and $\text{\rm Tr}_{p^2/p}(b)^2=1$. (If $\text{\rm Tr}_{p^2/p}(b)^2\ne 1$, then by Theorem~\ref{T3.1}, $f_b$ does not permute $\f_{p^2}$ and hence does not permute $\f_{p^4}$.) Therefore Conjecture~\ref{C4.1} can be stated in a more precise form:

\medskip
\noindent{\bf Conjecture}~$\boldsymbol{4.1'.}$
{\it Let $p\ge 5$. For $n=3$ and $b\in\{1,2,\dots,(p-1)/2\}$, $f_b$ does not permute $\f_{p^3}$. For $n=4$, $f_{1/2}$ does not permute $\f_{p^4}$.



\end{document}